\documentclass{article}

\usepackage[english]{babel}
\usepackage[utf8x]{inputenc}
\usepackage[T1]{fontenc}
\usepackage{graphicx}
\usepackage{hyperref}
\usepackage{caption}
\usepackage{psfrag}

\usepackage{amsthm}
\usepackage{amssymb}
\usepackage{amsmath}
\usepackage{amsfonts}

\newtheorem{thm}{Theorem}[section]
\newtheorem{prop}[thm]{Proposition}
\newtheorem{rem}[thm]{Remark}
\newtheorem{ex}[thm]{Example}
\newcommand{\argmin}{\text{argmin}}

\newcommand{\N}{\mathbb{N}}
\newcommand{\R}{\mathbb{R}}
\newcommand{\eps}{\epsilon}
\newcommand{\sign}{\text{sign}}
\newcommand{\de}{\mathrel{\mathop:}\hspace*{-.6pt}=}

\addtolength{\hoffset}{-1.5cm}
\addtolength{\textwidth}{3cm}
\addtolength{\voffset}{-2cm}
\addtolength{\textheight}{3cm}

\begin{document}
	
\title{Numerical Approximation of Hyperbolic Systems Containing an Interface}
\author{Nina Aguillon \footnotemark[1]\and  Raul Borsche  \footnotemark[2]}
\footnotetext[1]{Centre de Math\'ematiques et Informatique, 39 rue Joliot-Curie, 13453 Marseille Cedex 13, France 
	}
\footnotetext[2]{Technische Universit\"at Kaiserslautern, Department of Mathematics, Erwin-Schr\"odinger-Stra{\ss}e, 67663 Kaiserslautern, Germany 
	borsche@mathematik.uni-kl.de}

\maketitle
\begin{abstract}
	In this paper we present an approach to approximate numerically the solution of coupled hyperbolic conservation laws. 
	The coupling is achieved through a fixed interface, in which interface conditions are linking the traces of both sides.
	The numerical solver is based on central methods, like the Rusanov scheme, and does not use the structure of the Riemann Problem. 
	It consists in balancing the effects of the waves that enter the interface. 
	The scheme is well balanced with respect to all the piecewise constant equilibria associated with the interface condition and is able to maintain exactly conservation properties of the interface conditions.
	A detailed analysis and several numerical tests show the quality of the method. 
	Different applications, including sonic and transsonic flows and a multiphysic model are studied. 
\end{abstract}

In this paper we propose a simple numerical scheme to approximate the solution of the Cauchy problem

\begin{equation} \label{eq:HypSysInter} 
\begin{cases}
\partial_{t} U(t,x) + \partial_{x} f(U(t,x)) = 0, & \text{ for } x \in \R \setminus \{0\}, \\
\big( U(t,0_{-}),U(t,0_{+}) \big) \in \mathcal{G}, & \text{ a.e. } t>0, \\
U(0,x)= U^{0}(x).
\end{cases}
\end{equation}
Special attention is drawn to the point $x=0$, where the interface conditions $\mathcal{G}$ link the traces of the states on the left and right hand side.

This system can be used to describe e.g. the flow in pipes or channels with varying cross section \cite{ColomboMarcellini,GarciaNavarroHubbard}.
It can also be viewed as a particular coupling in networks of conservation laws \cite{MR2223073,CHS08,MR3195345}.
In the case where the equation~\eqref{eq:HypSysInter} is scalar, the theories of $L^{1}$-dissipative germ~\cite{AKR11} and of transmission maps~\cite{AC15} explore the links between the shape of $\mathcal{G}$ and the well-posedness of~(\ref{eq:HypSysInter}-\ref{eq:EI}).

Numerically speaking, a large class of efficient numerical schemes is available to solve~\eqref{eq:HypSysInter} away from the interface. 
However, the development of general numerical methods for the junction at $x=0$ remains an open challenge. 
Most of the present solvers rely on solving Riemann problems~\eqref{eq:RP} associated to~\eqref{eq:HypSysInter}, i.e. constant initial data on each side of the interface \cite{MR3195345,MR3328156,BKlar14,MR2223073,MR3237558, GLTR05}.
Extensions of such schemes to higher order have been studied in \cite{BK14,BK15}. Recently, relaxation procedures adapted to handle coupling conditions have been proposed for different models, see~\cite{CGHMR15, CKS14, ACCG14, ACCFGLRS07, ACCGLRS08}.

Solving such Riemann problems exactly requires a detailed knowledge of the structure of the solution to~\eqref{eq:HypSysInter}.
For complicated coupling scenarios these informations might be not accessible or only computable with high numerical costs.
Thus it seems essential to seek for numerical methods that do not require any information on the structure of the Riemann problem for~\eqref{eq:HypSysInter}. 
In that direction, the scalar case is particularly well studied.
Examples of such couplings are the fluid-particle toy-model of~\cite{AS12} or the general setting of scalar conservation laws with discontinuous flux, see~\cite{AKR11}.
 
Recently, in~\cite{Bor15} a systematic way to approximate the solution near the interface has been proposed.
The idea is to mimic numerically as close as possible the structure of the underlying Riemann problem.
This includes a numerical minimization of all waves which do not occur in the exact solution of the Riemann problem.

The approach in the present paper is to allow unphysical waves inside the node, but we require that their contributions cancel in the node.
Due to this modification the scheme simplifies significantly and the exact conservation property of the exact solution is restored.

The paper is organized as follows. 
In the first section, we present several sets of interface conditions for models of fluids dynamics, which illustrates the variety of situation described by system~\eqref{eq:HypSysInter}. 
In the second section, we give the mathematical framework in which~\eqref{eq:HypSysInter} is solved. 
We recall some results concerning the Riemann problem and explain the derivation of the new scheme. 
In the third section, we investigate the numerical equilibrium states associated to the proposed method, both for the Godunov and the Rusanov fluxes.
Section~\ref{S:Rusanov} contains a complete study of the scheme in the context of the Rusanov flux for a $2 \times 2$ model of fluid / particle interaction introduced in~\cite{Agu15}.
In the last Section, we apply this method to several examples, including  a new fluid / particle problem with heat exchange. 
This illustrates the interest of using a Riemann-problem free method, as no details of the Riemann problem for this model are available at the moment.
Furthermore test cases with different pressure laws or varying cross section are considered.

\section{Interface conditions}
In the sequel of this paper we will consider five representatives of the coupling conditions $\mathcal{G}$.

Let us first consider the Cauchy problem 
\begin{equation} \label{eq:HypSys} 
\begin{cases}
\partial_{t} U(t,x) + \partial_{x} f(U(t,x)) = 0 & \text{ for } x \in \R, \\
U(0,x)= U^{0}(x).
\end{cases}
\end{equation}
The solution $U$ takes its values in an open convex subset $\Omega$ of $\R^{n}$ and the flux $f: \Omega \subset \R^{n} \rightarrow \R^{n}$ is regular. 
The system is assumed to be strictly hyperbolic, i.e. the Jacobian matrix of $f$ is diagonalizable with $n$ distinct eigenvalues. 
Recall that the solution is not unique in the class of weak functions, but the well-posedness for~\eqref{eq:HypSys} is typically recovered by additionally enforcing an entropy inequality
\begin{equation} \label{eq:EI}
\partial_{t} E(U(t,x)) + \partial_{x}(F(U(t,x))) \leq 0, 
\end{equation} 
where the entropy $E$ is supposed to be regular and strictly convex. 
Therefore in the sequel we consider solutions of~\eqref{eq:HypSysInter} that verify~\eqref{eq:EI} on the sets $\{t>0, x<0\}$ and $\{t>0, x>0\}$. 
A natural extension of such a constraint to the coupling point is given by  
$$ F(U_{-})-F(U_{+}) \leq 0\ , $$
which guarantees that the entropy is non-increasing in the coupled system.

\subsection{Classical coupling}
The first choice models the classical solutions on a continuous line, which enters the larger framework of~\eqref{eq:HypSysInter} as
\begin{equation} \label{eq:Gcla}
\mathcal{G}_0= \{(U_{-},U_{+}) \in \Omega^{2}, \ f(U_{-})=f(U_{+}) \ \text{ and } \ F(U_{+})-F(U_{-}) \leq 0 \}.
\end{equation}
The solution of~\eqref{eq:HypSysInter} with the above coupling coincides with the solution of~\eqref{eq:HypSys}. 
Thus, this example provides a natural way to compare the coupling procedure to standard schemes for classical Riemann problems.

\subsection{Fluid/particle coupling}
The second set of coupling conditions arises from the modeling of a particle in a surrounding fluid~\cite{Agu15}.
The motion of the fluid is described by the isothermal Euler equations
$$ U=(\rho,q), \quad \Omega=\R^{+}_{\star} \times \R, \quad f(U)=\left(q, \frac{q^{2}}{\rho} + c^{2} \rho \right), $$
where $\R^{+}_{\star}=(0, +\infty)$. 
The corresponding entropy/entropy-flux pair is given by
$ E(U)= \frac{q^{2}}{2 \rho} + c^{2} \rho \log(\rho)$ and $F(U)=\frac{q}{\rho} \left( E(\rho,q) +c^{2} \rho \right)$.

At $x=0$ a fixed obstacle is located in the fluid. 
The fluid can pass with a certain resistance through the obstacle. 
As no fluid disappears, the total mass is conserved across the interface,
whereas the momentum decreases.
This can be described by the following coupling conditions
\begin{equation} \label{eq: Gpart}
  \mathcal{G}_{\lambda} = 
\begin{pmatrix} 
q_{-}=q_{+}=: q \\
\left( \frac{q^{2}}{\rho_{-}}+c^{2} \rho_{-} \right) -  \left( \frac{q^{2}}{\rho_{+}}+c^{2} \rho_{+} \right) = \lambda q \\
0 \leq q \leq c \rho_{-} \Longrightarrow  0 \leq q \leq c \rho_{+} \\ 
-c \rho_{+} \leq q \leq 0 \Longrightarrow  -c \rho_{-} \leq q \leq 0
\end{pmatrix},
\end{equation}
where $\lambda$ is a positive friction parameter representing the resistance of the obstacle.
The last two conditions can be understood as entropy conditions. 
The Riemann problem for this interface conditions have been studied intensively in~\cite{Agu15}.
Its solution and a variety of test cases are available, as well as numerical methods based on the solution of the Riemann problem, see~\cite{Agu14}. 
A simplification of this model is described in~\cite{LST08}.

\subsection{Fluid/particle coupling with heat exchange}\label{S:HeatExchange}

We now propose an extension of the above model to the case where the fluid has a varying temperature and can exchange heat with the obstacle.
The motion of the fluid is described by the Euler equations
$$ 
U=(\rho,q,E),
 \quad 
 \Omega=\R^{+}_{\star} \times \R \times \R^{+}_{\star},
 \quad 
 f(U)=\left(q, \frac{q^{2}}{\rho} + p,\frac{q}{\rho}(E+p) \right). 
$$
The system is closed with the ideal gas law
$p= e \rho (\gamma -1)$ with the adiabatic exponent $\gamma>1$.
Here $\rho$ denotes the density, $u$ velocity, $p$ pressure, $e$ internal energy and 
$E=\frac{1}{2} \rho u^{2} + \rho e$ the total energy.

For smooth solutions we can describe the influence of the particle on the fluid as source terms. 
These can be derived by considering the balance of forces and the total energy in the coupled system
\begin{equation} \label{eq:heatex}
\begin{cases}
\partial_{t} \rho + \partial_{x} (\rho u)= 0, \\
\partial_{t}(\rho u) + \partial_{x}( \rho u^{2} + p) = -\lambda \rho u \delta_{0}, \\
\partial_{t} E + \partial_{x}(u(E+p)) = - \lambda \rho u^{2} \delta_{0} - \mu \left( e- s_{P}\left( \frac{\rho}{\rho_{0}}\right)^{\gamma-1} \right)  \delta_{0}.
\end{cases}
\end{equation}
The force acting on the particle, $D=\lambda \rho u$, is proportional to the friction parameter $\lambda \geq 0$. It is located only at $x=0$, which is described by the Dirac measure $\delta_{0}(x)$. 

The work applied by this force,  $\lambda \rho u^{2}$, also appears in the energy balance. 
The term $ \mu \left( e- s_{P} \left( \frac{\rho}{\rho_{0}}\right)^{\gamma-1} \right)$ represents the heat exchange with the particle. It is described by a heat exchange parameter $\mu \geq 0$, a reference density $\rho_{0}$ and $s_{P} \geq 0$ has the dimension of an entropy.

Since the solutions of the Euler equations can not be expected to be continuous, the definition of the above source terms is complicated. 
Therefore we study a regularization of~\eqref{eq:heatex} where the Dirac measure is replaced by one of its regularization. 
It appears that the properties of a continuous and stationary fluid on the left and on the right of the particle are linked by universal relations, independent of the regularization of the Dirac mass.
\begin{prop} \label{P:CondInter}
	Let $\eps$ be a positive real and $H_{\eps}$ be a $\mathcal{C}^{1}$-regular function, increasing monotone from $0$ to $1$ on the interval $[-\eps, \eps]$. 
	Consider $x \mapsto (\rho_{\eps}(x), u_{\eps}(x), \allowbreak p_{\eps}(x))$  a $\mathcal{C}^{1}$-regular stationary solution of~\eqref{eq:heatex} where the Dirac mass is replaced by $H_{\eps}'$
	\begin{equation} \label{eq:regul}
	\begin{cases}
	(\rho_{\eps} u_{\eps})'= 0, \\
	(\rho_{\eps} u_{\eps}^{2}+ p_{\eps})'=-\lambda \rho_{\eps} u_{\eps} H_{\eps}', \\
	(u_{\eps}(E_{\eps}+p_{\eps}))' = - \lambda \rho_{\eps} u_{\eps}^{2} H_{\eps}' - \mu\left(e_{\eps}-s_{P} \left( \frac{\rho_{\eps}}{\rho_{0}}\right)^{\gamma-1} \right) H_{\eps}'.
	\end{cases}
	\end{equation}
	Then the states
	$ U_-= U_{\eps}(-\eps)$ and $U_{+}= U_{\eps}(\eps)$
	verify the following relations, independent of $\eps$ and $H_{\eps}$:
	$$\rho_{-}u_{-}=\rho_{+} u_{+} \de q, $$
	$$ \left( \frac{q^{2}}{\rho_{-}}+ p_{-}\right) - \left( \frac{q^{2}}{\rho_{+}}+ p_{+}\right)= \lambda q, $$
	and
	$$ (s_{+}-s_{P})= \exp \left(- \frac{\mu}{q} \right)(s_{-}-s_{P}), \ \text{ where }  s= e \rho^{1-\gamma}. $$
\end{prop}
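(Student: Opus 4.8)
The plan is to establish the three identities in increasing order of difficulty, treating the first two as bookkeeping and concentrating the real work on the third. The first identity is immediate: the first equation of~\eqref{eq:regul} states that $\rho_\eps u_\eps$ is constant on $[-\eps,\eps]$, hence $\rho_- u_- = \rho_+ u_+$, a common value we denote $q$ (nonzero, as the third identity tacitly requires). For the second identity I would integrate the momentum equation of~\eqref{eq:regul} over $[-\eps,\eps]$: since $H_\eps$ increases monotonically from $0$ to $1$, $\int_{-\eps}^{\eps} H_\eps'\,dx = 1$, and since $\rho_\eps u_\eps \equiv q$ the right-hand side integrates to $-\lambda q$; on the left $\rho_\eps u_\eps^2 = q^2/\rho_\eps$, so the fundamental theorem of calculus gives $\big(\tfrac{q^2}{\rho_+}+p_+\big)-\big(\tfrac{q^2}{\rho_-}+p_-\big) = -\lambda q$, i.e. the claimed relation.

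The substantial part is the third identity, and the idea is to derive a pointwise \emph{entropy balance} for smooth stationary solutions in which the friction contribution drops out. Set $s = e\rho^{1-\gamma}$; by the gas law this is $s = \tfrac{1}{\gamma-1}p\rho^{-\gamma}$. Differentiating $s_\eps$ along the solution and eliminating $\rho_\eps'$ through $\rho_\eps = q/u_\eps$ (so $\rho_\eps' = -\rho_\eps u_\eps'/u_\eps$) yields $\gamma p_\eps u_\eps' + u_\eps p_\eps' = (\gamma-1)\,u_\eps \rho_\eps^{\gamma}\, s_\eps'$. On the other hand, writing $E_\eps + p_\eps = \tfrac12 \rho_\eps u_\eps^2 + \tfrac{\gamma}{\gamma-1}p_\eps$ and using $\rho_\eps u_\eps \equiv q$, the energy equation of~\eqref{eq:regul} minus $u_\eps$ times its momentum equation reduces, after multiplication by $\gamma-1$, to $\gamma p_\eps u_\eps' + u_\eps p_\eps' = (\gamma-1)\big(S_2 - u_\eps S_1\big)$, where $S_1 = -\lambda q H_\eps'$ and $S_2 = -\lambda q u_\eps H_\eps' - \mu\big(e_\eps - s_P(\rho_\eps/\rho_0)^{\gamma-1}\big)H_\eps'$. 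The key observation is that the friction power term $-\lambda \rho_\eps u_\eps^2 H_\eps' = -\lambda q u_\eps H_\eps'$ in $S_2$ cancels exactly against $u_\eps S_1$, so that $u_\eps\rho_\eps^{\gamma}s_\eps' = -\mu\big(e_\eps - s_P(\rho_\eps/\rho_0)^{\gamma-1}\big)H_\eps'$, that is, using $e_\eps = s_\eps\rho_\eps^{\gamma-1}$ and $q = \rho_\eps u_\eps$ (and absorbing the fixed constant $\rho_0^{1-\gamma}$ into $s_P$), $q\rho_\eps^{\gamma-1} s_\eps' = -\mu\rho_\eps^{\gamma-1}(s_\eps - s_P)H_\eps'$.

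Dividing by the positive factor $\rho_\eps^{\gamma-1}$ gives the scalar linear ODE $q\,s_\eps' = -\mu(s_\eps - s_P)H_\eps'$, i.e. $(s_\eps - s_P)' = -\tfrac{\mu}{q}(s_\eps - s_P)H_\eps'$. Integrating from $-\eps$ to $\eps$ and using $\int_{-\eps}^{\eps}H_\eps' = 1$ produces $s_+ - s_P = \exp(-\mu/q)(s_- - s_P)$, which is exactly the third relation; the exceptional case $s_- = s_P$ is covered since then the ODE forces $s_\eps \equiv s_P$. The main obstacle is the algebraic passage from the momentum and energy equations to the entropy balance, together with the verification that the friction power cancels and that the heat-exchange term collects precisely into $\rho_\eps^{\gamma-1}(s_\eps - s_P)$; once that is in hand, the remaining ODE argument is elementary. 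It is also worth noting in passing that the statement presupposes $q \neq 0$ (for $q=0$ the stationary solution degenerates and the formula is vacuous), which is consistent with the appearance of $\exp(-\mu/q)$.
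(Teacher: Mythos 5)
Your proof is correct and follows essentially the same route as the paper: both integrate the momentum equation directly and then combine the energy and momentum equations so that the friction power cancels, reducing the problem to the linear ODE $q\,(s_\eps - s_P)' = -\mu(s_\eps - s_P)H_\eps'$, which is integrated using $\int_{-\eps}^{\eps}H_\eps' = 1$. The only differences are cosmetic (you subtract $u_\eps$ times the momentum equation where the paper expands the product rule, and you absorb $\rho_0^{1-\gamma}$ into $s_P$ to reconcile the paper's two conventions for $s$).
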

\begin{proof}
	The first line of~\eqref{eq:regul} exactly says that the momentum $q_{\eps}=\rho_{\eps} u_{\eps}$ is constant across the particle. 
	The second line of~\eqref{eq:regul} reads
	\begin{equation} \label{eq:charge}
	\left( \frac{q_{\eps}^{2}}{\rho_{\eps}}+p_{\eps} \right)' = -\lambda q_{\eps} H_{\eps}',
	\end{equation}
	which yields the second relation by integrating on $[-\eps, \eps]$.
	
	Let us now focus on the last line of~\eqref{eq:regul}. 
	Replacing $E_{\eps}$ by $\frac{1}{2}q_{\eps} u_{\eps} + \rho_{\eps} e_{\eps}$ and using the fact that $q_{\eps}$ is constant, we obtain
	$$ 
	\begin{aligned}
	u_{\eps}' & \left( \frac{1}{2} q_{\eps} u_{\eps} + \rho_{\eps} e_{\eps} + p_{\eps} \right) + u_{\eps} \left( \frac{1}{2} q_{\eps} u_{\eps}' + \rho_{\eps}' e_{\eps} + \rho_{\eps} e_{\eps}'+ p_{\eps}' \right) \\
	&= -\lambda q_{\eps} u_{\eps} H_{\eps}' - \mu \left(e_{\eps}-s_{P} \left( \frac{\rho_{\eps}}{\rho_{0}}\right)^{\gamma-1} \right)H_{\eps}' .
	\end{aligned}
	$$
	Inserting the first two relations $(q_{\eps}u_{\eps}'+p_{\eps}')=-\lambda u_{\eps} H_{\eps}'$  and $q_{\eps}'=0$ into $ \rho_{\eps} u_{\eps}' +  \rho_{\eps}' u_{\eps} = 0$ we obtain
	\begin{equation} \label{eq:step}
	q_{\eps}e_{\eps}'+u_{\eps}'p_{\eps} = q_{\eps}e_{\eps}'- \frac{q_{\eps}}{\rho_{\eps}} \rho_{\eps}' (\gamma-1) e_{\eps}=  - \mu \left(e_{\eps}-s_{P} \left( \frac{\rho_{\eps}}{\rho_{0}}\right)^{\gamma-1} \right) H_{\eps}' \ .
	\end{equation}
	Introducing the quantity $s_{\eps}= e_{\eps} \left( \frac{\rho_{\eps}}{\rho_{0}} \right)^{1-\gamma}$ and computing its derivative gives
	$$s_{\eps}'= e_{\eps}' \left( \frac{\rho_{\eps}}{\rho_{0}} \right)^{1-\gamma} +(1-\gamma) e_{\eps} \frac{\rho_{\eps}^{-\gamma}}{\rho_{0}^{1-\gamma}} \rho_{\eps}' = \frac{\rho_{\eps}^{1-\gamma}}{\rho_{0}^{1-\gamma}} \left(e_{\eps}'-(\gamma-1)\frac{\rho_{\eps}'}{\rho_{\eps}}e_{\eps} \right). $$
	Thus~\eqref{eq:step} can be written as
	$$ q_{\eps} \left( \frac{ \rho_{\eps}}{\rho_{0}} \right)^{\gamma-1}  s_{\eps}'=  - \mu \left(e_{\eps}-s_{P} \left( \frac{\rho_{\eps}}{\rho_{0}}\right)^{\gamma-1} \right) H_{\eps}',  $$
	and simplified to
	$$ q_{\eps}(s_{\eps}-s_{P})'= -\mu (s_{\eps}-s_{P} ) H_{\eps}'.  $$
	If $q_{\eps}=0$, the only solution is $s=s_{P}$. 
	Moreover due to the Cauchy--Lipschitz theorem, $s_{\eps}-s_{P}$ does not change sign, and if it is not null we have
	$$ \log(|s_{\eps}-s_{P}|)'= -\frac{\mu}{q_{\eps}} H_{\eps}', $$
	and we can conclude the desired statement by integration.
\end{proof}

\begin{rem}
	In Proposition~\ref{P:CondInter}, the stationary solution is assumed to be $\mathcal{C}^{1}$-regular. 
	In particular, it excludes the case where a stationary entropy satisfying shock lies somewhere inside the thickened particle. 
	This can occur only when the flow is supersonic at the entrance of the particle. 
	Thus the interface conditions derived in Proposition~\ref{P:CondInter} are valid for subsonic flows only. 
	Allowing a stationary shock inside the particle (or in other words, looking for a \emph{piecewise} $\mathcal{C}^{1}$-regular stationary solution of~\eqref{eq:heatex}) yields to more complicated computations, because of a lack of compatibility between the regular part of the solution and the shocks. 
	In the case of an isothermal flow, a complete study has been done in~\cite{Agu14}.
\end{rem}

Following the above proposition we consider for the particle with heat exchange the coupling conditions 
\begin{equation} \label{eq:heat_CouplingConditions}
\begin{cases}
q_-=q_+, \\
\left( \frac{q^{2}}{\rho_{+}}+p_{+} \right) -  \left( \frac{q^{2}}{\rho_{-}}+p_{-} \right) = -\lambda q, \\
(s_{+}-s_{P})= \exp \left( \frac{\mu}{q} \right)(s_{-}-s_{P}),
\end{cases}
\end{equation}
with  $s= e \left(\frac{\rho}{\rho_{0}} \right)^{1-\gamma}$.
Note that balancing the jump in the flux with the right hand side of \eqref{eq:heatex} yields the problem of defining $\rho$ and $e$ inside of the jump.
This is avoided by using \eqref{eq:heat_CouplingConditions}.

\subsection{Gas dynamics with different pressure laws}
We also consider a case where the conservation law is different on each side of the interface
\begin{equation} \label{eq:diffpressure}
 \begin{cases}
 \partial_{t} U + \partial_{x} f_{L}(U)= 0 & \text{ on } x<0, \\
 \partial_{t} U + \partial_{x} f_{R}(U)= 0 & \text{ on } x>0,
\end{cases}
\end{equation}
where the flux function $f_{L}$ and $f_{R}$ are different, but the system is strictly hyperbolic for those two fluxes. We chose the model presented in~\cite{CGHMR15}, where the equation of gas dynamics is used on both side of the interface
$$ 
U=(\rho,q,E),
 \quad 
 \Omega=\R^{+}_{\star} \times \R \times \R^{+}_{\star},
 \quad 
 f_{L/R}(U)=\left(q, \frac{q^{2}}{\rho} + p,\frac{q}{\rho}(E+p_{L/R}) \right),
$$
but with a discontinuous pressure law $p_{L/R}= e \rho (\gamma_{L/R} -1)$ on $x<0$ and $x>0$. Following~\cite{CGHMR15}, we consider two different interface conditions. The first one is associated to the so-called ``flux coupling''
\begin{equation} \label{eq:fluxcoupling}
 \mathcal{G}_{\text{flux}}= \{(U_{-}, U_{+}) \in \Omega^{2}: f_{L}(U_{-})=f_{R}(U_{+})\},
\end{equation}
which yields the conservation of the density $\rho$, the momentum $q$ and the total energy $E$. The second one is  the so-called ``state coupling'' 
\begin{equation} \label{eq:statecoupling}
  \mathcal{G}_{\text{state}}= \{(U_{-}, U_{+}) \in \Omega^{2}: (\rho_{-}, u_{-}, p_{L,-}) =(\rho_{+}, u_{+}, p_{R,+})\}, 
\end{equation}
which ensures the continuity of the density $\rho$, the velocity $u$ and the pressure $p$. For the latter, the subscript $L$ and $R$ recall that the pressure law is different on each side of the interface, i.e the last equation of the coupling conditions reads
$$e_{-} \rho_{-} (\gamma_{L} -1) = e_{+} \rho_{+} (\gamma_{R} -1), $$
where $e= \frac{E}{\rho} - \frac{1}{2} u^{2}$ is the internal energy.

\subsection{Barotropic flows in a nozzle with piecewise constant cross-section} 
As last example we consider the following model~\cite{CKS14}:
\begin{equation} \label{eq:nozzle}
\begin{cases}
 \partial_{t} \alpha \rho + \partial_{x} \alpha \rho w = 0, \\
 \partial_{t} \alpha \rho w + \partial_{x} (\alpha \rho w^{2} + \alpha p(\tau) ) = p(\tau) \partial_{x} \alpha,
\end{cases}
\end{equation}
where $\alpha$ is the cross section of the nozzle, $\rho$ is the density of the fluid, $\tau= \frac{1}{\rho}$ is the specific volume and $u$ is its velocity. The pressure law is classically supposed to be positive, decreasing and convex. 
As outlined in~\cite{CKS14}, system~\eqref{eq:nozzle} also describes the dynamics of flows in porous media. 
In that case $\alpha$ is the void fraction of the respective medium. 
In this application it is natural to consider a piecewise constant cross section
$$ \alpha= \alpha_{L} \mathbf{1}_{x<0} + \alpha_{R} \mathbf{1}_{x>0}, \ \alpha_{L}>0, \ \alpha_{R}>0, $$
while the derivation of~\eqref{eq:nozzle} for flows in a nozzle requires some smoothness on $\alpha$.

The difficulty is to define the left hand side $p(\tau) \partial_{x} \alpha= p(\tau) (\alpha_{R}-\alpha_{L}) \delta_{x=0}$ at point $x=0$. It is well known (\cite{IT92}, \cite{GLF04} and~\cite{MR3237558}) that system~\eqref{eq:nozzle} is not hyperbolic when one of the acoustic waves has speed $0$. The system is said to be resonant and uniqueness is lost. 
We will not tackle this problem here and restrict our attention for this particular example on subsonic flows.

For subsonic flows system~\eqref{eq:nozzle}, supplemented with the equation $\partial_{t} \alpha=0$, is strictly hyperbolic, and it is possible to show that
\begin{equation} \label{eq:Gnozzle}
 \alpha \rho w \text{ and } \frac{w^{2}}{2}+ e(\tau)+ \tau p(\tau) \text{ are continous at } x=0,
\end{equation}
where $\tau \mapsto e(\tau)$ is an antiderivative of $-p$. This is the set of interface conditions used at $x=0$ in model~\eqref{eq:nozzle}.

\section{Numerical method}
In the development of a numerical method the Riemann problem at the interface plays an important role.
Therefore we study the model~\eqref{eq:HypSysInter} with the initial conditions
\begin{equation}\label{eq:RP}
U^{0}(x)= U_{L} \mathbf{1}_{x<0}+ U_{R} \mathbf{1}_{x \geq 0}, \ (U_{L}, U_{R}) \in \Omega^{2} 
\end{equation}
and recall the strategy to prove existence and uniqueness in case of \eqref{eq: Gpart}. 
First, we look for a selfsimilar solution, i.e. a solution that can be written as $U(t,x)=W\left(\frac{x}{t}; U_{L}, U_{R} \right)$.
Thus the traces of the solution $U_{-}$ and $U_{+}$ on the lines $x=0^{-}$ and $x=0^{+}$ are constant in time.
Moreover, once these traces are determined, the whole solution is easily constructed by solving the Riemann problem without interface between $U_{L}$ and $U_{-}$ on the left half plane $x<0$, and between $U_{+}$ and $U_{R}$ on the right half plane $x>0$. 
For these states the following three conditions hold:
\begin{itemize}
\item On $x<0$, the solution coincides with the restriction of the solution of the Riemann problem between the left state $U_{L}$ and the right state $U_{-}$ for~\eqref{eq:HypSys}, and $U_{-}$ is the value of the solution on the line $x=0^{-}$, i.e.
\begin{equation}\label{eq:Uminus}
	U_- = W\left(0^-; U_{L}, U_{-} \right)\ .
\end{equation} 
\item On $x>0$, the solution coincides with the restriction of the solution of the Riemann problem between the left state $U_{+}$ and the right state $U_{R}$ for~\eqref{eq:HypSys}, and $U_{+}$ is the value of the solution on the line $x=0^{+}$, i.e. 
\begin{equation}\label{eq:Uplus}
	U_+ = W\left(0^+; U_{+}, U_{R} \right)\ .
\end{equation} 
\item $U_{-}$ and $U_{+}$ verify the interface conditions: 
\begin{equation} \label{eq:inG}
  (U_{-}, U_{+}) \in \mathcal{G}.
\end{equation}
\end{itemize}
Analytical results on the existence and uniqueness of a solution can be derived by assuming that $(U_{L},U_{R})$ is close enough from a stationary state for~\eqref{eq:HypSysInter}. Then, it is typically proved that the Riemann problem has a unique self-similar solution in the vicinity of this stationary solution. 
We refer the reader to~\cite{Bre00} for the case $n=1$ and to~\cite{CHS08} for $n=2$. 
See also~\cite{CG10} for the slightly different case where boundary conditions are imposed on $x=0$. 
In some particular cases, it is possible drop the smallness assumption on the initial data, as in~\cite{Agu15} for the model~\eqref{eq:Gcla} and in~\cite{LST08} for the Burgers-particle model.

This procedure can be used directly for the construction of a numerical scheme.
But the resulting Godunov method requires many details of the solution at the interface.  
Therefore it is only applicable for systems where the structure of the solution is known and it is not flexible for modifications of the considered equations.

In the following we describe a simple approach to approximate the solution at the interface, which does not require detailed information of the underlying Riemann problem.
\subsection{General setting}
Consider an equidistant spacial discretization of width $\Delta x$ and denote by $x_{j}= j \Delta x - \frac{\Delta x}{2}$ the centers of the cells. 
The point $x=0$ is located at the interface between the cells labelled  with $0$ and $1$. 
In time we consider the $n$-th time step $\Delta t^{n}= t^{n+1}-t^{n}$.

Away from this interface at $x=0$ any finite volume scheme based on the update formula
\begin{equation} \label{eq:updateFA}
U_{j}^{n+1}= U_{j}^{n}- \frac{\Delta t}{\Delta x}(f_{j+1/2}^{n}-f_{j-1/2}^{n})
\end{equation}
can be used.
The classical method can be applied as long as the stencil of the finite volume scheme (i.e. the cells $U_{k}^{n}$ used to computed $f_{j+1/2}^{n}$ and $f_{j-1/2}^{n}$) stays on one side of the interface at $x=0$.
In the sequel we focus on $2$-point fluxes $f_{j+1/2}^{n}= g(U_{j}^{n}, U_{j+1}^{n})$.
Thus we can use~\eqref{eq:updateFA} in all cells but have to define $f_{1/2}^{n,-}$  and $f_{1/2}^{n,+}$, the fluxes on the right and on the left of the interface at $x=0$.
Note that these fluxes do not coincide for a general choice of $\mathcal{G}$.

We follow a ghost cells approach and introduce at each time step $n$ the states $U_{-}^{n}$ and $U_{+}^{n}$ representing the fluid's properties at $x=0^{-}$ and $x=0^{+}$. 
These can be inserted into the numerical flux function such that the final numerical method is given by
\begin{equation} \label{eq:update}
\begin{cases}
U_{j}^{n+1}=U_{j}^{n}- \frac{\Delta t}{\Delta x}( g(U_{j}^{n}, U_{j+1}^{n})- g(U_{j-1}^{n}, U_{j}^{n})) & \text{ for } j \leq -1, \\
U_{0}^{n+1}=U_{0}^{n}- \frac{\Delta t}{\Delta x}( g(U_{0}^{n}, U_{-}^{n})- g(U_{-1}^{n}, U_{0}^{n})), \\
U_{1}^{n+1}=U_{1}^{n}- \frac{\Delta t}{\Delta x}( g(U_{1}^{n}, U_{2}^{n})- g(U_{+}^{n}, U_{1}^{n})), \\
U_{j}^{n+1}=U_{j}^{n}- \frac{\Delta t}{\Delta x}( g(U_{j}^{n}, U_{j+1}^{n})- g(U_{j}^{n}, U_{j-1}^{n})) & \text{ for } j \geq 2. 
\end{cases}
\end{equation}

\subsection{Choice of $U_{-}^{n}$ and $U_{+}^{n}$}
The key part of the numerical method is the choice of the values $U_{-}^{n}$ and $U_{+}^{n}$.
Their construction will depend on the numerical flux function $g$ used in \eqref{eq:update} since waves going to the left or right are incorporated differently.
As example consider the conservation of mass which is part of the coupling conditions \eqref{eq: Gpart}. 
If the approximation of $q_{\mp}$ at the interface does not imply that the first component of the numerical fluxes $g(U_{0}^{n}, U_{-}^{n})$ and $g(U_{+}^{n}, U_{1}^{n})$ coincide, mass will be lost or generated at the coupling point.

If for $g$ the Godunov flux $ g_{God}(U_{L}, U_{R})= f(W(0; U_{L}, U_{R}))$ is used, 
we can pick $U_{-}$ and $U_{+}$ as the traces of the exact solution of the Riemann problem such that~\eqref{eq:Uminus}, \eqref{eq:Uplus} and \eqref{eq:inG} hold. 
Note that \eqref{eq:Uminus} and \eqref{eq:Uplus} imply
\begin{equation} \label{eq:no wave}
g_{God}(U_{L},U_{-})= f(U_{-}) \quad \text{ and } \quad g_{God}(U_{+},U_{R})= f(U_{+}).  
\end{equation}

The aim of the present paper is to generalize the above procedure to cases when $g$ is an arbitrary numerical flux.
Unfortunately for many choices of $g$ the system~\eqref{eq:inG} and \eqref{eq:no wave} is over constrained and does not admit any solution. 
In~\cite{Bor15} this problem was relaxed by replacing \eqref{eq:no wave} with
\begin{equation} \label{eq:mini}
 \begin{aligned}
  (U_{-}^{n},U_{+}^{n}) &=  \argmin\Big( (\tilde{U}_{-}, \tilde{U}_{+}) \in \mathcal{G}, \\
    & \qquad |g(U_{0}^{n},\tilde{U}_{-})- f(\tilde{U}_{-})| + |g(\tilde{U}_{+},U_{1}^{n})- f(\tilde{U}_{+})| \Big). 
\end{aligned}
\end{equation}
In other words, the scheme tries to minimize the strength of the waves entering the junction. Indeed, we can rewrite Scheme~\eqref{eq:updateFA} in the fluctuation form
$$ U_{j}^{n+1}=U_{j}^{n}- \frac{\Delta t}{\Delta x} \big( (f_{j+1/2}^{n}-f(U_{j}^{n})) + (f(U_{j}^{n})-f_{j-1/2}^{n}) \big). $$
We interpret quantity $f_{j+1/2}^{n}-f(U_{j}^{n})$ as the overall contribution on the waves created at interface $x=x_{j+1/2}$ and entering the $j$-th cell (thus travelling to the left). Similarly, $f(U_{j}^{n})-f_{j-1/2}^{n}$ represents the contribution of the waves created at the left interface $x=x_{j-1/2}$ and going to the right. In~\eqref{eq:mini}, the numerical traces $(\tilde{U}_{-}, \tilde{U}_{+})$ are chosen such that the total strength of the waves entering the interface on its left (first term) and entering on its right (second term) is as small as possible.

In the present paper we explore a different strategy.
Although the exact solution only contains waves entering the domain, we allow waves inside the interface.
In the following we require that the numerical waves entering the coupling point cancel out at the interface.
Thus we look for $(U_{-}^{n},U_{+}^{n}) \in \mathcal{G}$ such that
\begin{equation} \label{eq:no fluct}
\underbrace{g(U_{0}^{n},U_{-}^{n})- f(U_{-}^{n})}_{\text{left entering waves}} + \underbrace{f(U_{+}^{n})-g(U_{+}^{n},U_{1}^{n})}_{\text{right entering waves}} =0
\end{equation}
holds.
Once system~(\ref{eq:inG}-\ref{eq:no fluct}) is solved, the fluid is updated with~\eqref{eq:update}, with the chosen numerical flux $g$.

One main advantage of the choice \eqref{eq:no fluct}, is that all quantities which are conserved by the coupling conditions, will be conserved exactly by the numerical method.
Note that this is not true neither with $(U_{-}^{n},U_{+}^{n})$ being the exact traces around the interface and $g$ another flux than the Godunov flux, nor with Choice~\eqref{eq:mini}.

\section{Numerical equilibrium} \label{S:Equilibrium}
One important aspect of a numerical method for coupling conditions is the ability to preserve numerically the equilibrium states of the underlying system.
Furthermore since the solution of the Riemann problem is self similar, equilibrium states will be generated at the interface.

As long as the numerical flux is consistent, i.e. $g(U,U)=f(U)$ $\forall U\in \Omega$, any two states $U_0$ and $U_1$ satisfying the coupling conditions admit $U_-=U_0$ and $U_+=U_1$ as solution when solving~(\ref{eq:inG}-\ref{eq:no fluct}).
Thus equilibrium states of the system \eqref{eq:HypSysInter} can be preserved by the numerical method.

More delicate is the reverse question, does the numerical method allow only equilibrium states of the system~\eqref{eq:HypSysInter} as stationary solutions.
Unfortunately the answer to this question is negative if the Godunov flux in combination with \eqref{eq:no fluct} is used. This is new compared to the scalar case, see~\cite{AC15}.
A detailed analysis of this case and a possible fix is presented in the section below.
In case of $g$ being the Rusanov flux the situation is simpler and no false equilibrium states can be obtained.

\subsection{The Godunov flux with $\mathcal G_\lambda$}
If $g$ is the Godunov flux $g_{God}$, than there exists a pair of constant states $(U_0, U_1)$ which does not satisfy the coupling conditions but is numerically a stationary solution. 
\begin{prop}~\label{P:Q2GodPart}
There exist $(U_{0},U_{1})$ that do not belong to $\mathcal{G}_{\lambda}$, for which there exists at least one couple of states $(U_{-},U_{+})$ such that
\begin{itemize}
\item $(U_{-},U_{+})$ verifies the interface conditions \eqref{eq:inG}. 
\item Equation~\eqref{eq:no fluct} is fulfilled. 
\item In the solution of the Riemann problem between $U_{0}$ and $U_{-}$, all the waves are going to the right, i.e. 
$g_{God}(U_{0}, U_{-})=f(U_{0}).$
\item In the solution of the Riemann problem between $U_{+}$ and $U_{1}$, all the waves are going to the left, i.e. 
$g_{God}(U_{+}, U_{1})=f(U_{1}).$
\end{itemize}
\end{prop}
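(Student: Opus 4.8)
The plan is to construct an explicit counterexample by exploiting the degree of freedom that the Godunov flux "hides" waves travelling away from the interface. Concretely, I would pick $(U_-, U_+) \in \mathcal{G}_\lambda$ to be a genuine subsonic stationary state for the interface (so $0 \le q \le c\rho_\pm$, say), and then choose $U_0$ and $U_1$ by sending a wave out of the interface on each side. On the left, let $U_0$ be obtained from $U_-$ by following the $1$-characteristic (the slow, negative-speed family) so that the Riemann problem $(U_0,U_-)$ consists of a single wave moving strictly to the right — then $g_{God}(U_0,U_-)=f(U_0)$ holds by definition of the Godunov flux. Symmetrically, on the right let $U_1$ be obtained from $U_+$ by following the $2$-characteristic (the fast, positive-speed family, reversed in the relevant sense) so that the Riemann problem $(U_+,U_1)$ consists of a single wave moving strictly to the left, giving $g_{God}(U_+,U_1)=f(U_1)$. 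With these choices the third and fourth bullets are automatic.

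**Verifying \eqref{eq:no fluct}.** The key computation is then to check \eqref{eq:no fluct}. With the above choices, $g(U_0^n,U_-^n)-f(U_-^n) = f(U_0)-f(U_-)$ and $f(U_+^n)-g(U_+^n,U_1^n) = f(U_+)-f(U_1)$, so \eqref{eq:no fluct} becomes
\begin{equation} \label{eq:cex_cond}
f(U_0)-f(U_-) + f(U_+)-f(U_1) = 0.
\end{equation}
Since $(U_-,U_+)\in\mathcal{G}_\lambda$, the first component of $f(U_-)$ and $f(U_+)$ agree (mass: $q_-=q_+$), while the second components differ by $\lambda q$. So \eqref{eq:cex_cond} imposes two scalar constraints relating $U_0$ and $U_1$: equality of the mass fluxes $q_0 = q_1$ (combined with $q_-=q_+$, this is one condition on the curves), and a momentum-flux balance of the form $(\text{momentum flux of }U_0) - (\text{momentum flux of }U_1) = \lambda q$. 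The plan is to show that the one-parameter wave-curve families through $U_-$ (for $U_0$) and through $U_+$ (for $U_1$) are rich enough that these two equations can be solved simultaneously with $(U_0,U_1)\notin\mathcal{G}_\lambda$ — for instance by noting that generically $f(U_0)\ne f(U_-)$ and $f(U_1)\ne f(U_+)$ individually (the wave is genuinely nontrivial), so the flux jump is redistributed between the two sides rather than vanishing on each, which is exactly what fails the coupling condition $q_-=q_+$ read at $(U_0,U_1)$. A clean way to finish is a dimension count: we have a two-parameter family of candidates $(U_0,U_1)$ (one wave-curve parameter on each side), \eqref{eq:cex_cond} cuts it down by two scalars, leaving (generically) a zero-dimensional but nonempty set of solutions, and one checks that the obvious solution $(U_0,U_1)=(U_-,U_+)$ is not isolated, or alternatively exhibits a second branch explicitly in the isothermal Euler variables where the wave curves are known in closed form.

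**Main obstacle.** The hard part will be verifying that \eqref{eq:cex_cond} has a solution with $(U_0,U_1) \notin \mathcal{G}_\lambda$ rather than only the trivial one — i.e. genuinely producing a \emph{spurious} equilibrium, not just re-deriving $\mathcal{G}_\lambda$. This requires using the specific structure of the isothermal Euler flux and its Lax curves: one must check that the map sending (left wave parameter, right wave parameter) to the pair (mass-flux defect, momentum-flux defect) in \eqref{eq:cex_cond} is not injective, or has a nontrivial level set through a point off $\mathcal{G}_\lambda$. I expect this to be a short but genuinely model-specific computation — parametrizing a $1$-rarefaction or $1$-shock curve for $(q^2/\rho + c^2\rho)$ and solving the resulting algebraic system — and it is the only place where smallness or a careful choice of the base state $(U_-,U_+)$ (e.g. strictly subsonic, away from $q=c\rho_\pm$) will be needed to guarantee the constructed $U_0,U_1$ stay in $\Omega$ and the waves have the claimed signs. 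Everything else (the two flux-equality identities from the Godunov flux, and membership of $(U_-,U_+)$ in $\mathcal{G}_\lambda$) is immediate.
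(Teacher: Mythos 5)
Your overall setup is the same as the paper's: fix a genuine equilibrium $(U_{-},U_{+})\in\mathcal{G}_{\lambda}$, place $U_{0}$ and $U_{1}$ in the sets where $g_{God}(U_{0},U_{-})=f(U_{0})$ and $g_{God}(U_{+},U_{1})=f(U_{1})$, and solve the resulting balance $f(U_{0})-f(U_{1})=f(U_{-})-f(U_{+})$. However, there are two genuine gaps. First, you treat ``$(U_{0},U_{1})\notin\mathcal{G}_{\lambda}$'' as a delicate non-injectivity statement to be verified, when in fact it is automatic from the structure of the balance: writing $\eta=\frac{q^{2}}{\rho}+c^{2}\rho$, the balance forces $q_{0}=q_{1}=:q_{F}$ and $\eta_{0}-\eta_{1}=\lambda q_{I}$, where $q_{I}=q_{-}=q_{+}$ is the momentum of the \emph{interface} states; membership of $(U_{0},U_{1})$ in $\mathcal{G}_{\lambda}$ would instead require $\eta_{0}-\eta_{1}=\lambda q_{F}$. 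Hence any solution with $q_{F}\neq q_{I}$ is spurious by construction, and the whole discussion of level sets and injectivity is unnecessary.

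Second, and more seriously, your dimension count does not close. If $U_{0}$ and $U_{1}$ are kept on the one-parameter \emph{subsonic} wave curves through $U_{-}$ and $U_{+}$ (which is what ``following a characteristic'' produces), then choosing $q_{F}\in(0,q_{I})$ already determines both candidates, and the paper shows their momentum-flux difference is \emph{strictly smaller} than $\lambda q_{I}$ (the $2$-rarefaction on the left lowers $\eta$, the $1$-shock on the right raises it). So on the subsonic curves the only solution of your two-by-two system is $(U_{-},U_{+})$ itself: the second branch you hope to exhibit is empty there, and the claim that the trivial solution ``is not isolated'' is false in this parametrization. The paper's construction only works because the set $\{U_{0}:\,g_{God}(U_{0},U_{-})=f(U_{0})\}$ also contains a two-dimensional supersonic open set $\Omega^{sup}_{\rightarrow}(U_{-})$; the extra degree of freedom there (pushing $\rho_{0}$ down the decreasing branch of $\rho\mapsto q_{F}^{2}/\rho+c^{2}\rho$, which blows up as $\rho\to 0$) is exactly what allows one to hit $\eta_{0}-\eta_{1}=\lambda q_{I}$. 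Without invoking that supersonic region your argument cannot be completed. A minor additional error: you have the wave families swapped. For subsonic $U_{-}$, the states $U_{0}$ for which all waves in the Riemann problem $(U_{0},U_{-})$ go right are those linked to $U_{-}$ by a $2$-wave only (a nontrivial $1$-wave would travel left), and symmetrically $U_{1}$ must be linked to $U_{+}$ by a $1$-wave only.
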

\begin{proof}
Let us fix $U_{-}=(\rho_{-}, q_{-})$ and $U_{+}=(\rho_{+}, q_{+})$ such that $(U_{-}, U_{+})$ belongs to $\mathcal{G}_{\lambda}$. 
We have $q_{I} \de q_{-}=q_{+}$ and
$$ \left( \frac{q_{I}^{2}}{\rho_{-}} + c^{2} \rho_{-} \right) - \left( \frac{q_{I}^{2}}{\rho_{+}} + c^{2} \rho_{+} \right) = \lambda q_{I}. $$

We look for $U_{0}$ and $U_{1}$ verifying the last three conditions of the proposition, 
but not $(U_{0},U_{1}) \in \mathcal{G}_{\lambda}$.
By~\eqref{eq:no fluct}, we have $f(U_{0})-f(U_{1})=f(U_{-})-f(U_{+})$, thus $q_{0}=q_{1}:=q_{F}$ and
\begin{equation} \label{eq:NumEq}
  \left( \frac{q_{F}^{2}}{\rho_{0}} + c^{2} \rho_{0} \right) - \left( \frac{q_{F}^{2}}{\rho_{1}} + c^{2} \rho_{1} \right) = \lambda q_{I}.
\end{equation}
The fact that $(U_{0}, U_{1})$ does not necessarily belong to $\mathcal{G}_{\lambda}$ follows from the fact that $q_{I}$ and $q_{F}$ can be different.

Let us now focus on the condition $g_{God}(U_{0},U_{-})=f(U_{0})$.
We recall that this condition exactly states that in the Riemann problem between $U_{0}$ and $U_{-}$ the waves only go to the right. 
In the case of the isothermal Euler equations and for a fixed $U_{-}$, it is possible to describe the set of such $U_{0}$, see~\cite{Agu14} for details. 
In the $(\rho,q)$-plane, it consists of the union of an increasing curve $\Gamma_{\rightarrow}^{sub}(U_{-})$ included in the subsonic triangle $\{(\rho, q): |q| < c \rho \}$ and an open set $\Omega_{\rightarrow}^{sup}(U_{-})$ included in $\{(\rho, q), q > c \rho \}$.
These sets are shown with blue colour in Figure~\ref{F:SubSub}. 
The important points are that the states in $\Gamma_{\rightarrow}^{sub}(U_{-})$ are linked to $U_{-}$ by only a $2$-wave and that  $\Omega_{\rightarrow}^{sub}(U_{-})$ is delimited by a curve which is the image of $\Gamma_{\rightarrow}^{sub}(U_{-})$ under the operation ``stationary shock''
$$ 
\begin{array}{ccc}
\Gamma_{\rightarrow}^{sub}(U_{-}) \cap \{q>0\} & \longrightarrow & \partial \Omega_{\rightarrow}^{sup}(U_{-}) \\
(\rho, q)  &  \mapsto & \left(\frac{q^{2}}{c^{2} \rho}, q \right)\ .
\end{array}
$$
Similarly, the set of all $U_{1}$ such that $g_{God}(U_{+},U_{1})=f(U_{1})$ is the union of a decreasing curve $\Gamma_{\leftarrow}^{sub}(U_{+})$ included in the subsonic triangle and of an open set $\Omega_{\leftarrow}^{sup}(U_{+})$ included in $\{(\rho, q), q<-c \rho \}$.
 These sets are depicted in red in Figure~\ref{F:SubSub}.

\begin{figure}[h!t]
\centering
\begin{psfrags}
\psfrag{A}[][][1.4]{$\Omega^{sup}_{\rightarrow}(U_{-})$}
\psfrag{B}[][][1.4]{$\Gamma^{sub}_{\rightarrow}(U_{-})$}
\psfrag{C}[][][1.4]{$\Omega^{sup}_{\leftarrow}(U_{+})$}
\psfrag{D}[][][1.4]{$\Gamma^{sub}_{\leftarrow}(U_{+})$}
\psfrag{a}[][][1.2]{$q$}
\psfrag{r}[][][1.2]{$\rho$}
\psfrag{q}[][][1.2]{$q_{F}$}
\psfrag{qc}[][][1.2]{$q_{I}$}
\psfrag{URc}[][][1.2]{$U_{+}$} 
\psfrag{ULc}[][][1.2]{$U_{-}$} 
\psfrag{UR}[][][1.2]{$U_{1}$} 
\psfrag{UL}[][][1.2]{$U_{0}$} 
\psfrag{tV0}[][][1.2]{$\tilde{V_{0}}$} 
\psfrag{ULt}[][][1.2]{$V_{0}$} 
\psfrag{U0}[][][1.2]{$U_{0}$} 
\psfrag{H}[][][1.2]{$bla$} 
\includegraphics[width=\linewidth, height = 0.9\linewidth]{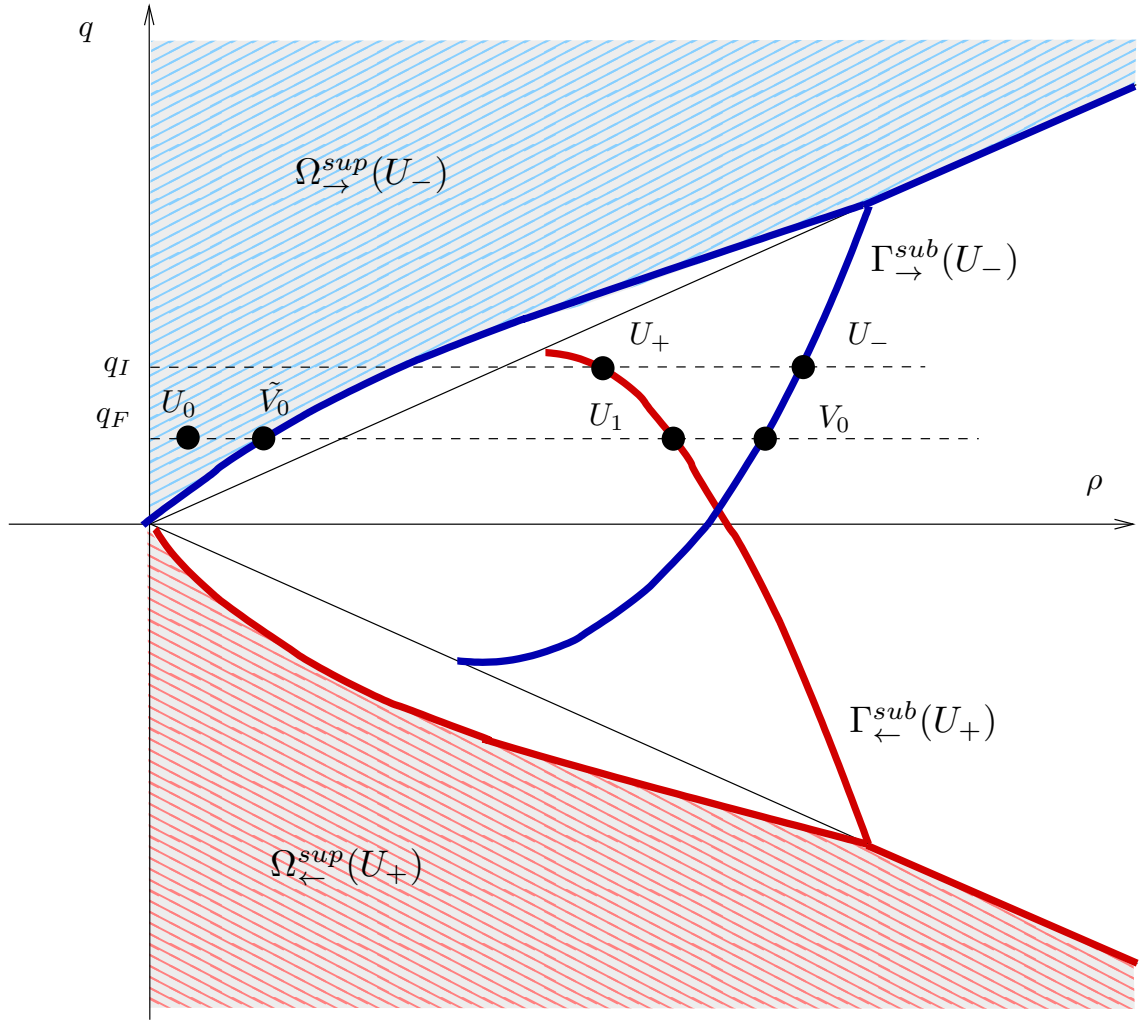}
\caption{How to find $U_{0}$ and $U_{1}$ verifying the conditions of Proposition~\ref{P:Q2GodPart} from $U_{-}$ and $U_{+}$}
\label{F:SubSub}
\end{psfrags}
\end{figure}
We are now in position to find all couples $(U_{0},U_{1})$ verifying the last three conditions of the proposition
, in the case where both $U_{-}$ and $U_{+}$ are subsonic.
Without loss of generality we suppose that $q_{I}$ is non negative.
Then, we fix $q_{F}$ in the interval $(0, q_{I})$, and we denote by $U_{1}=(\rho_{1}, q_{F})$ the state at the intersection of $\Gamma^{sub}_{\leftarrow}(U_{+})$ and $q=q_{F}$.
Similarly is $V_{0}=(r_{0}, q_{F})$ at the intersection of $\Gamma^{sub}_{\rightarrow}(U_{-})$ and $q=q_{F}$. 
As $r_{0}<\rho_{-}$, $V_{0}$ is linked to $U_{-}$ by a $2$-rarefaction wave with positive speed and we have
$$ \frac{q_{F}^{2}}{r_{0}}+c^{2} r_{0} < \frac{q_{I}^{2}}{\rho_{-}}+c^{2} \rho_{-}\ . $$
On the right hand side is $\rho_{+}< \rho_{1}$, thus $U_{+}$ is linked to $U_{1}$ by a $1$-shock with negative speed and
$$ \frac{q_{F}^{2}}{\rho_{1}}+c^{2} \rho_{1} > \frac{q_{I}^{2}}{\rho_{+}}+c^{2} \rho_{+}. $$
It follows that
$$ \left( \frac{q_{F}^{2}}{r_{0}}+c^{2} r_{0} \right) - \left( \frac{q_{F}^{2}}{\rho_{1}}+c^{2} \rho_{1} \right) <\left( \frac{q_{I}^{2}}{\rho_{-}}+c^{2} \rho_{-} \right) - \left( \frac{q_{I}^{2}}{\rho_{+}}+c^{2} \rho_{+}\right) = \lambda q_{I}. $$
Now, denote by $\tilde{V_{0}}= \left( \tilde{r_{0}}=\frac{q_{F}^{2}}{c^{2} r_{0}}, q_{F} \right)$ the states linked to $V_{0}$ by a stationary shock. 
A simple computation yields $\frac{q_{F}^{2}}{\tilde{r_{0}}}+c^{2} \tilde{r_{0}}=\frac{q_{F}^{2}}{r_{0}}+c^{2} r_{0}$ and $\tilde{r_{0}} \leq \frac{q_{F}}{c}$. 
Moreover, the function 
$ \rho \mapsto \frac{q_{F}^{2}}{\rho}+c^{2} \rho $
is decreasing on $\left(0, \frac{q_{F}}{c} \right)$, and tends to $+ \infty$ when $\rho$ tends to $0$. 
Thus, there exists a unique $\rho_{0}$ smaller than $\tilde{r_{0}}$ such that
$$ \left( \frac{q_{F}^{2}}{\rho_{0}}+c^{2} \rho_{0} \right) - \left( \frac{q_{F}^{2}}{\rho_{1}}+c^{2} \rho_{1} \right) = \lambda q_{I} $$
and the state $U_{0}=(\rho_{0}, q_{F})$ belongs to $\Omega^{sup}_{\rightarrow}(U_{-})$ and verifies~\eqref{eq:NumEq}. 
Summarizing the above construction, the two pairs $(U_{0},U_{1})$ and $(U_{-},U_{+})$ verify the four points of the proposition
, and since 
$q_{I}\neq q_{F}$, $(U_{0},U_{1})$ 
does not belong to $\mathcal{G}_{\lambda}$.
\end{proof}

\begin{ex}
Take $ c=1, \ \lambda=1$,
$$U_{L}= ( \rho_{L}= 0.109272, \ q_{F}=0.618826), \ U_{R}=( \rho_{R}= 3.024454, q_{F}) $$
and
$$U_{-}=( \rho_{-}= 3.31851, \ q_{I}=0.771179) \ \text{ and } \ U_{+}=( \rho_{+}= 2.824455, q_{I}). $$
Then the pair of states $(U_{L}, U_{R})$ does not belong to $\mathcal{G}_{\lambda}$, i.e.
$$ \left( \frac{q_{F}^{2}}{\rho_{L}} + c^{2} \rho_{L} \right) - \left( \frac{q_{F}^{2}}{\rho_{R}} + c^{2} \rho_{R} \right) \neq \lambda q_{F}$$
but $(U_{-}, U_{+})$ does. 
Moreover, it holds
$$ g_{God}(U_{0},U_{-})= f(U_{0}) \ \text{ and } \ g_{God}(U_{+},U_{1})= f(U_{1}). $$
This is an example of a numerical equilibrium state which is not related to an exact stationary state as shown in Proposition~\ref{P:Q2GodPart}.
\end{ex}

\begin{rem}
Uniqueness is easily restored by replacing $q_{I}$ by $q_{F}$ in~\eqref{eq:NumEq}, i.e. by imposing
$$ \left( \frac{q_{F}^{2}}{\rho_{0}} + c^{2} \rho_{0} \right) - \left( \frac{q_{F}^{2}}{\rho_{1}} + c^{2} \rho_{1} \right) = \lambda q_{F}$$
instead of~\eqref{eq:NumEq}.
Numerically, and denoting by $(g^{\rho}, g^{q})$ the two components of the numerical flux, it boils down to search $U_{-}=(\rho_{-}, q_{I})$  and $U_{+}=(\rho_{+}, q_{I})$  such that
$$ 
\begin{cases}
  g^{\rho}(U_{0},U_{-})= g^{\rho}(U_{+}, U_{1}), \\
  g^{q}(U_{0},U_{-}) - g^{q}(U_{+}, U_{1}) = \lambda g^{\rho}(U_{0},U_{-}),
\end{cases}
$$
instead of
$$ 
\begin{cases}
  g^{\rho}(U_{0},U_{-})= g^{\rho}(U_{+}, U_{1}), \\
  g^{q}(U_{0},U_{-}) - g^{q}(U_{+}, U_{1}) = \lambda q_{I}.  
\end{cases}
$$
This fix reflects the fact that we actually want to impose is~\eqref{eq:no wave} and not just~\eqref{eq:no fluct}.
\end{rem}

\subsection{The Godunov flux with $\mathcal G_0$}
	In the classical case, i.e. the case $\lambda = 0$, the above construction gives a number of couples of states $(U_{0},U_{1})$ satisfying the conditions of Proposition~\ref{P:Q2GodPart}.
	However this time, Equation~\eqref{eq:NumEq} reduces to the equality of the mass flux $\frac{q^{2}}{\rho}+c^{2} \rho$ through the interface, thus $(U_{0},U_{1})$ verifies automatically the interface conditions.

\subsection{The Rusanov flux with any interface conditions}\label{Q: Rus cla}
As alternative to the Godunov flux we consider the Rusanov flux $g_{Rus}$ given by
\begin{equation} \label{eq:fluxRus}
 g_{Rus}(U_{L},U_{R})= \frac{f(U_{L})+f(U_{R})}{2}-\frac{A}{2}(U_{R}-U_{L}),
\end{equation}
where $A$ verifies the subcharacteristic condition
 \begin{equation} \label{eq: subchar1}
 A \geq \max \left( \frac{|q_{R}|}{\rho_{R}}, \frac{|q_{L}|}{\rho_{L}}, \frac{|q_{\star}|}{\rho_{\star}} \right)+ c
\end{equation}
and 
\begin{equation} \label{eq:U*Rus}
 U_{\star}= \frac{U_{L}+U_{R}}{2}- \frac{1}{2A}(f(U_{R})-f(U_{L})). 
\end{equation}
The Rusanov flux is one of the simplest flux functions possible.  
Therefore we consider it as an easy alternative to the complicated Godunov method and  a representative of many central schemes.
\begin{rem}
  The Rusanov flux can be viewed as a HLL approximate Riemann solver with wave speeds $-A$ and $A$, see e.g.~\cite{GR96}.  
 The conservation of the density and the momentum yield as intermediate state 
 $$(\rho_{\star}, q_{\star})= \left( \frac{\rho_{L} + \rho_{R}}{2} + \frac{q_{L}-q_{R}}{2A}, \frac{q_{L} + q_{R}}{2} + \frac{\eta_{L}-\eta_{R}}{2A} \right),$$ 
which is the expression of the middle state $U_{*}$ given in~\eqref{eq:U*Rus}.
\end{rem}

We check that, independently of the coupling conditions, stationary solutions are uniquely determined.
\begin{prop}
If the pair of state $(U_{0},U_{1})$ is such that for some $(U_{-},U_{+})$ in $\mathcal{G}$ the following holds
\begin{equation} \label{eq:RusEq}
\begin{cases}
g_{Rus}(U_{0}, U_{-})=f(U_{0})\ , \\
g_{Rus}(U_{+}, U_{1})=f(U_{1})\ .
\end{cases}
\end{equation}
Then $U_{0}=U_{-}$ and $U_{1}=U_{+}$ and in particular $(U_{0},U_{1})$ belongs to $\mathcal{G}$.
\end{prop}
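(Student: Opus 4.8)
The plan is to exploit the structure of the Rusanov flux \eqref{eq:fluxRus} directly. Writing out $g_{Rus}(U_{0},U_{-})=f(U_{0})$ using \eqref{eq:fluxRus} gives
$$\frac{f(U_{0})+f(U_{-})}{2}-\frac{A}{2}(U_{-}-U_{0})=f(U_{0}),$$
which rearranges to $f(U_{-})-f(U_{0})=A(U_{-}-U_{0})$, i.e. $f(U_{-})-AU_{-}=f(U_{0})-AU_{0}$. Similarly the second equation of \eqref{eq:RusEq} rearranges to $f(U_{+})+AU_{+}=f(U_{1})+AU_{1}$. So the whole statement reduces to showing that the map $U\mapsto f(U)-AU$ is injective (then the first identity forces $U_{0}=U_{-}$), and that $U\mapsto f(U)+AU$ is injective (forcing $U_{1}=U_{+}$). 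Once $U_{0}=U_{-}$ and $U_{1}=U_{+}$, membership $(U_{0},U_{1})\in\mathcal{G}$ is immediate from $(U_{-},U_{+})\in\mathcal{G}$.

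The key step is therefore the injectivity of $U\mapsto f(U)\mp AU$ on $\Omega$ under the subcharacteristic condition. For the fluid/particle system with $f(U)=(q,\,q^{2}/\rho+c^{2}\rho)$, I would argue componentwise: if $f(U)-AU=f(V)-AV$, the first component gives $q_{U}-Aq_{U}/\rho_{U}\cdot\rho_{U}$... more carefully, the first component gives $q_{U}-A\rho_{U}=q_{V}-A\rho_{V}$, and the second gives $q_{U}^{2}/\rho_{U}+c^{2}\rho_{U}-Aq_{U}=q_{V}^{2}/\rho_{V}+c^{2}\rho_{V}-Aq_{V}$. One then checks that the Jacobian of $U\mapsto f(U)-AU$ is $Df(U)-AI$, whose eigenvalues are $\mu_{i}(U)-A$ where $\mu_{i}(U)=q/\rho\pm c$ are the eigenvalues of $Df(U)$; by \eqref{eq: subchar1}, $A>|q/\rho|+c\geq|\mu_{i}(U)|$, so all eigenvalues of $Df(U)-AI$ are strictly negative, hence $Df(U)-AI$ is invertible everywhere. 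Invertibility of the Jacobian alone gives only local injectivity, so the cleanest route is: parametrize the difference along the segment (or a path) from $U$ to $V$ and use that $f(U)-AU-(f(V)-AV)=\int_{0}^{1}(Df-AI)(V+t(U-V))\,dt\,(U-V)$, and show the averaged matrix $M:=\int_{0}^{1}(Df-AI)\,dt$ is negative definite in a suitable (entropy) symmetrization, so $M(U-V)=0$ forces $U-V=0$. The symmetrization by the Hessian of the convex entropy $E$ is exactly what makes $Df$ symmetric and hence lets the eigenvalue bound translate into definiteness of the integrated matrix.

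The main obstacle I anticipate is making the global injectivity argument clean: the eigenvalue bound $|\mu_{i}|<A$ gives negative definiteness of $Df-AI$ (resp. $Df+AI$) only after symmetrizing with $E''(U)$, and one must be careful that $E''$ varies along the path, so $M$ is not literally $\int(Df-AI)$ conjugated by a fixed matrix. A robust fix is to do the computation explicitly for the two-component system at hand — solve $q_{U}-A\rho_{U}=q_{V}-A\rho_{V}$ for, say, $q_{U}-q_{V}=A(\rho_{U}-\rho_{V})$, substitute into the second equation, and verify directly that the only real solution is $\rho_{U}=\rho_{V}$, using $A>c$ and the subcharacteristic bound on the momenta. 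This avoids the general symmetrization machinery and keeps the proof self-contained; it is a short but slightly fiddly algebraic manipulation, which is why I would not grind through it here.
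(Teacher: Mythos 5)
Your proposal is correct and follows essentially the same route as the paper: the identity $f(U_{-})-f(U_{0})=A(U_{-}-U_{0})$ that you derive is exactly the Rankine--Hugoniot relation for a discontinuity of speed $A$, and the paper concludes in one line that such a shock (and its speed $-A$ counterpart on the right) is incompatible with the subcharacteristic condition \eqref{eq: subchar1} unless the states coincide. Your reformulation as injectivity of $U\mapsto f(U)\mp AU$, with the averaged-Jacobian justification, is the same fact spelled out more carefully; the paper simply takes that step for granted.
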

\begin{proof}
Equation~\eqref{eq:RusEq} we can rewrite as
$$ 
\begin{cases}
f(U_{-})-f(U_{0})= A(U_{-}-U_{0}), \\
f(U_{+})-f(U_{1})=A(U_{1}-U_{+}).
\end{cases}
$$
Thus $U_{0}$ is linked to $U_{-}$ by a shock at speed $A$, and $U_{+}$ is linked to $U_{1}$ by a shock at speed $-A$, with contradicts  Condition~\eqref{eq: subchar1} on $A$, unless $U_{-}=U_{0}$ and $U_{1}=U_{+}$.
\end{proof}

\section{Analysis of the Rusanov flux} \label{S:Rusanov}
In this section we analyze if the coupling procedure with the Rusanov flux has a unique solution, for the interface conditions $\mathcal{G}_{\lambda}$.
Unfortunately this is not always the case, but we will develop criteria to single out the correct interface values.
Therefore we investigate the evolution of the entropy when using the Rusanov flux.
\begin{prop} \label{p: EI Rus}
If condition~\eqref{eq: subchar1} holds, then we have the entropy inequality
\begin{equation} \label{eq: EI Rus}
 F(U_{R})- F(U_{L}) \leq A \big(E(U_{R})+ E(U_{L})-2E(U_{*})\big)
\end{equation}
with equality if and only if $U_{R}=U_{L}$.
\end{prop}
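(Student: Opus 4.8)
The plan is to recognize \eqref{eq: EI Rus} as the discrete entropy inequality for the Rusanov (local Lax--Friedrichs) scheme, which follows from the fact that the Rusanov flux is the genuine Godunov-type flux associated to the HLL solver with wave speeds $\pm A$. The key structural observation is that $U_{*}$ defined in \eqref{eq:U*Rus} is exactly the intermediate state of this two-wave approximate Riemann solver, and more importantly $U_{*}$ can be written as a convex combination: the half-plane update of the Rusanov scheme over a time step with $A\,\Delta t/\Delta x = 1/2$ produces $U_{*}$ from $U_L$ and $U_R$. Concretely, $U_{*} = \tfrac12(U_L+U_R) - \tfrac{1}{2A}(f(U_R)-f(U_L))$, which is precisely $U_L - \tfrac{1}{2A}(g_{Rus}(U_L,U_R)-f(U_L)) \;=\; U_R - \tfrac{1}{2A}(f(U_R)-g_{Rus}(U_L,U_R))$ after substituting the definition \eqref{eq:fluxRus}; one checks this identity by a direct algebraic manipulation.

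First I would establish that identity, writing $U_{*}$ simultaneously as a forward-in-time update of the left cell and of the right cell with CFL ratio $\tfrac{1}{2A}$. Then I would invoke the standard fact (valid because $A$ satisfies the subcharacteristic condition \eqref{eq: subchar1}, so all physical wave speeds lie in $[-A,A]$) that the true entropy solution of the Riemann problem between $U_L$ and $U_R$ is, after one such step, an average that by Jensen's inequality for the convex entropy $E$ and the integral form of the entropy inequality \eqref{eq:EI} satisfies
\begin{equation*}
E(U_{*}) \;\le\; \tfrac12\bigl(E(U_L)+E(U_R)\bigr) - \tfrac{1}{2A}\bigl(F(U_R)-F(U_L)\bigr).
\end{equation*}
Rearranging this is exactly \eqref{eq: EI Rus}. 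The cleanest self-contained route is: let $V(x/t)$ be the entropy solution of the Riemann problem with data $(U_L,U_R)$; integrate the entropy inequality $\partial_t E(V)+\partial_x F(V)\le 0$ over the rectangle $(x,t)\in(-A\,\Delta t,\,A\,\Delta t)\times(0,\Delta t)$, use that $V=U_L$ on the left edge and $V=U_R$ on the right edge (no wave has escaped), and use Jensen's inequality $E\bigl(\frac{1}{2A\Delta t}\int V\,dx\bigr) \le \frac{1}{2A\Delta t}\int E(V)\,dx$ together with the identity $\frac{1}{2A\Delta t}\int_{-A\Delta t}^{A\Delta t} V(x/\Delta t)\,dx = U_{*}$, which is just conservation across the rectangle.

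For the equality case, I would argue that Jensen's inequality for the strictly convex function $E$ is an equality only if $V(x/t)$ is constant on $(-A,A)$ in the similarity variable, i.e.\ the Riemann solution has no wave in the fan $|x/t|<A$; combined with the subcharacteristic bound this forces $U_L=U_R$, and conversely if $U_L=U_R$ both sides of \eqref{eq: EI Rus} vanish. The main obstacle is making the "integrate the entropy inequality over a space-time rectangle and apply Jensen" step rigorous without assuming more regularity than the entropy solution possesses, and handling the strict-convexity equality case carefully; but since we are only working with the self-similar Riemann solution and the entropy inequality \eqref{eq:EI} is assumed to hold in the distributional sense on each half-line, this is the classical argument (as in the analysis of Lax--Friedrichs / HLL schemes) and goes through. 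An alternative, purely algebraic proof — expanding both sides using the definition of $U_{*}$ and the convexity of $E$ via its Hessian — is also available but messier; I would present the Jensen argument as the conceptual one.
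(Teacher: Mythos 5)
Your argument is correct and is exactly the route the paper takes: its proof consists of the single sentence ``this is a straightforward consequence of the Jensen inequality applied on the underlying approximate Riemann solver,'' with a reference to Harten--Lax--van Leer, and your proposal simply spells out that argument (identifying $U_{*}$ as the average of the exact Riemann fan over $(-A\Delta t, A\Delta t)$, integrating the entropy inequality over the space--time rectangle, and applying Jensen for the strictly convex $E$, with the equality case handled via strict convexity). No discrepancy to report.
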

\begin{proof} This is a straightforward consequence of the Jensen inequality applied on the underlying approximate Riemann solver, see~\cite{HLL83}.
\end{proof}

\subsection{The Rusanov flux with $\mathcal G_0$}
Consider the classical coupling conditions given by~\eqref{eq:Gcla} and denote the momentum flux (also called charge) $\frac{q^2}{\rho}+c^2 \rho$ by $\eta$.

\begin{prop} \label{p: existence cla} Let $U_{L}= (\rho_{L},q_{L})$ and $U_{R}=(\rho_{R},q_{R})$  be two elements of $\Omega$, and consider $U_{*}=(\rho_{*},q_{*})$ the state defined by~\eqref{eq:U*Rus}. 
Then system~(\ref{eq:inG}-\ref{eq:no fluct}) with $\mathcal{G}= \mathcal{G}_{0}$ and $g=g_{Rus}$
 always admits the solution
 $$U_{-}=U_{+}=(\rho_{\star}, q_{\star}). $$
\begin{itemize}
 \item
 This solution is unique  if $\rho_{\star}^2 - \frac{q_{\star}^2}{c^2} \leq 0$ or $q_{\star}=0$. 
 \item If $\rho_{\star}^2 - \frac{q_{\star}^2}{c^2} > 0$ System~(\ref{eq:inG}-\ref{eq:no fluct}) has a second solution, namely
\begin{equation} \label{eq: sol noncla}
 \begin{cases}
 U_{-}= (\rho_{\star}- r, q_{\star}) \text{ and } U_{+}= (\rho_{\star}- r, q_{\star}) \ \text{ if } \ q_{\star}>0, \\
 U_{-}= (\rho_{\star}+ r, q_{\star}) \text{ and } U_{+}= (\rho_{\star}- r, q_{\star}) \ \text{ if } \ q_{\star}<0,
\end{cases}
\end{equation}
 where $r=\sqrt{\rho_{\star}^2 - \frac{q_{\star}^2}{c^2} }$.
\end{itemize}
\end{prop}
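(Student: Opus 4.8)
The plan is to translate the system (\ref{eq:inG}-\ref{eq:no fluct}) into a small set of scalar equations in the unknowns $(\rho_-,q_-,\rho_+,q_+)$, exploiting the special structure of $\mathcal{G}_0$ and of the Rusanov flux for the isothermal Euler system. First I would write out Condition~\eqref{eq:no fluct} componentwise. Since for $\mathcal{G}_0$ we have $f(U_-)=f(U_+)$, the condition $g_{Rus}(U_0,U_-)-f(U_-)+f(U_+)-g_{Rus}(U_+,U_1)=0$ collapses, after inserting the explicit Rusanov formula~\eqref{eq:fluxRus}, to
$$ \tfrac{1}{2}\big(f(U_0)+f(U_-)\big)-\tfrac{A}{2}(U_--U_0)-f(U_-)= g_{Rus}(U_+,U_1)-f(U_+), $$
and using $f(U_-)=f(U_+)$ this becomes
$$ \tfrac{1}{2}(f(U_0)-f(U_-))-\tfrac{A}{2}(U_--U_0)= \tfrac{1}{2}(f(U_1)-f(U_+))+\tfrac{A}{2}(U_1-U_+). $$
Rearranged, this is exactly the statement that $\tfrac12\big(f(U_0)-A U_0\big)-\tfrac12\big(f(U_1)+A U_1\big)$ equals $\tfrac12\big(f(U_-)-A U_-\big)-\tfrac12\big(f(U_+)+A U_+\big)$; but $f(U_-)=f(U_+)$, so the right-hand side is $-\tfrac{A}{2}(U_-+U_+)$. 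Comparing with the definition~\eqref{eq:U*Rus} of $U_\star$ applied to the pair $(U_0,U_1)$, one sees that~\eqref{eq:no fluct} for $\mathcal{G}_0$ is equivalent to $U_-+U_+ = U_0+U_1-\tfrac1A(f(U_1)-f(U_0)) = 2U_\star$, i.e.
$$ \frac{U_-+U_+}{2} = U_\star. $$
Combined with $f(U_-)=f(U_+)$, this is the full content of the system, so I would record that as the first reduction.

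Next I would exploit $f(U_-)=f(U_+)$. The first component gives $q_-=q_+=:q$, and the second component gives $\tfrac{q^2}{\rho_-}+c^2\rho_- = \tfrac{q^2}{\rho_+}+c^2\rho_+$, i.e. $\eta(U_-)=\eta(U_+)$. The function $\rho\mapsto \tfrac{q^2}{\rho}+c^2\rho$ is strictly convex, decreasing on $(0,|q|/c)$, increasing on $(|q|/c,\infty)$, with minimum at $\rho=|q|/c$; hence for fixed $q\ne0$ either $\rho_-=\rho_+$, or $\rho_-$ and $\rho_+$ are the two distinct roots symmetric about the sonic value, which are related by $\rho_-\rho_+ = q^2/c^2$ (the ``stationary shock'' relation). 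From the averaging condition, the $q$-component forces $q = q_\star$ directly. For the $\rho$-components: in the symmetric case $\rho_-=\rho_+$, the average condition gives $\rho_-=\rho_+=\rho_\star$, recovering the always-available solution $U_-=U_+=(\rho_\star,q_\star)$. In the asymmetric case I would set $\rho_\pm = \rho_\star \mp r$ (up to sign), plug into $\rho_-\rho_+=q_\star^2/c^2$ to get $(\rho_\star-r)(\rho_\star+r)=\rho_\star^2-r^2 = q_\star^2/c^2$, hence $r=\sqrt{\rho_\star^2-q_\star^2/c^2}$, which is real precisely when $\rho_\star^2-q_\star^2/c^2>0$ — this is the dichotomy in the statement. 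When $q_\star=0$ the condition $\eta(U_-)=\eta(U_+)$ with the monotonicity of $\rho\mapsto c^2\rho$ forces $\rho_-=\rho_+$ outright, so only the trivial solution survives; that disposes of the $q_\star=0$ branch of the uniqueness claim.

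The remaining point is to pin down the \emph{sign} structure in~\eqref{eq: sol noncla}, i.e. which of $\rho_-,\rho_+$ is the larger root, and here is where I expect the only real subtlety. The interface set $\mathcal{G}_0$ is not merely $\{f(U_-)=f(U_+)\}$ but also carries the entropy inequality $F(U_+)-F(U_-)\le 0$. For a stationary-shock pair with $q_\star>0$ (flow to the right), admissibility of the entropy condition selects the transition that does not increase entropy across the interface in the direction of flow; a direct computation of $F$ on the two states, or invoking the standard fact that an entropic stationary jump of the isothermal Euler system goes from supersonic to subsonic in the flow direction, forces $\rho_- < \rho_+$ when $q_\star>0$ and $\rho_- > \rho_+$ when $q_\star<0$ — which is exactly the sign pattern written in~\eqref{eq: sol noncla} (note that in the $q_\star>0$ line both states are written as $\rho_\star-r$, meaning $U_-=U_+$ there; I would double check this against the entropy computation, since naively the asymmetric branch should give genuinely distinct $\rho_\pm$, and reconcile any sign bookkeeping with the convention $r=\sqrt{\rho_\star^2-q_\star^2/c^2}\ge 0$). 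Finally, for the uniqueness assertion when $\rho_\star^2-q_\star^2/c^2\le 0$: in that regime the equation $\rho_-\rho_+=q_\star^2/c^2$ together with $\rho_-+\rho_+=2\rho_\star$ has no solution with $\rho_-\ne\rho_+$ (the quadratic $t^2-2\rho_\star t + q_\star^2/c^2$ has nonpositive discriminant), so $\rho_-=\rho_+=\rho_\star$ is the unique possibility, completing the proof. The main obstacle, then, is not the algebra but correctly accounting for the entropy inequality in $\mathcal{G}_0$ so as to obtain the precise signs and to be sure no further spurious solution is admitted.
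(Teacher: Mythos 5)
Your proof is correct and follows essentially the same route as the paper's: reduce \eqref{eq:no fluct} to $q=q_\star$ and $\tfrac{1}{2}(\rho_-+\rho_+)=\rho_\star$, solve the resulting quadratic relation $\rho_-\rho_+=q_\star^2/c^2$ for the asymmetric root $r=\sqrt{\rho_\star^2-q_\star^2/c^2}$, and use the entropy part of $\mathcal{G}_0$ (stationary shocks go from supersonic to subsonic in the flow direction) to fix the signs. Your suspicion about the first line of \eqref{eq: sol noncla} is also warranted: it is a typo, since the paper's own proof concludes $\rho_-=\rho_\star-\sign(q_\star)\,r$ and $\rho_+=\rho_\star+\sign(q_\star)\,r$, so for $q_\star>0$ one should read $U_+=(\rho_\star+r,q_\star)$.
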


\begin{proof}
The interface conditions~\eqref{eq:Gcla} imply $q_{-}=q_{+}:=q$ and $\eta_{-}=\eta_{+}:=\eta$. 
Using~\eqref{eq:no fluct} yields the equality of the numerical flux on both sides of the particle, i.e.
$$ 
\begin{cases}
     q+q_{L} -A(\rho_{-} -\rho_{L}) = q_{R}+q -A(\rho_{R} -\rho_{+}), \\
     \eta+\eta_{L} -A(q -q_{L}) = \eta_{R}+\eta -A(q_{R} -q).
\end{cases}
$$
these we can rearrange to obtain
\begin{equation} \label{eq: q cla}
 q=q_{\star}=\frac{q_{L} + q_{R}}{2} + \frac{\eta_{L}-\eta_{R}}{2A}
\end{equation}
and
$$ \frac{\rho_{-} + \rho_{+}}{2}=\frac{\rho_{L} + \rho_{R}}{2} + \frac{q_{L}-q_{R}}{2A}=\rho_{\star}. $$
We express $\rho_{-}$ and $\rho_{+}$ by $\rho_{-}=\rho_{\star}-\tilde{r}$ and $\rho_{+}=\rho_{\star}+\tilde{r}$, for some real $\tilde{r}$ in $(-\rho_{\star},\rho_{\star})$. 
The interface condition associated to the conservation of momentum  $\eta_{-}=\eta_{+}$ gives 
$$  \frac{q_{\star}^2}{\rho_{\star}-\tilde{r}} + c^2 (\rho_{\star}-\tilde{r}) = \frac{q_{\star}^2}{\rho_{\star}+\tilde{r}} + c^2 (\rho_{\star}+\tilde{r})\ ,$$
which always admits the trivial solution $\tilde{r}=0$. 
If $|q_{\star}|$ and $\rho_{\star}^2 - \frac{q^2}{c^2}$ are both strictly positive, there are two other solutions in $(-\rho_{\star},\rho_{\star})$ given by
$$  \tilde{r} =  \pm \sqrt{\rho_{\star}^2 - \frac{q_{\star}^2}{c^2}} = \pm r. $$

It remains to see which of the two solutions is selected by the entropy condition $F(U_{-}) \leq F(U_{+})$. On the one hand, the coupling conditions~\eqref{eq:Gcla} are nothing but the Rankine--Hugoniot relations for a stationary shock. This shock is entropy satisfying if and only if 
$$ 
\begin{cases}
  u_{-} \geq c \ \text{ and }  \ u_{+} \leq c & \text{ if } q>0, \\
  u_{+} \leq -c \ \text{ and }  \ u_{-} \geq -c & \text{ if } q<0.
\end{cases}
$$
On the other hand, it is easy to show that
$$ 
\begin{cases}
  \frac{q_{\star}}{\rho_{\star}- r} \geq c \ \text{ and } \frac{q_{\star}}{\rho_{\star}+ r} \leq c & \text{ if } q_{\star} >0, \\
  \frac{q_{\star}}{\rho_{\star}- r} \leq -c \ \text{ and } \frac{q_{\star}}{\rho_{\star}+ r} \geq -c & \text{ if } q_{\star}<0.
\end{cases}
$$
As a consequence, the other entropy satisfying solution is $\rho_{-}= \rho_{\star}- \sign(q_{\star}) r$ and $\rho_{+}= \rho_{\star}+ \sign(q_{\star}) r$.
\end{proof}

We now investigate the existence of an entropy inequality for the scheme~\eqref{eq:update} with the Rusanov flux. 
\begin{prop} \label{p: EInum1}
Consider the scheme~\eqref{eq:update} with $U_{-}^{n}=U_{+}^{n}=(\rho_{\star}, q_{\star})$ defined as in Proposition~\ref{p: existence cla}, with $U_{L}=U_{0}^{n}$ and $U_{R}=U_{1}^{n}$. 
Suppose that $A$ verifies the subcharacteristic condition~\eqref{eq: subchar1} for ($U_{L}=U_{j}^{n}$ and $U_{R}=U_{j+1}^{n}$) for all $j \neq 0$, for ($U_{L}=U_{0}^{n}$ and $U_{L}=U_{-}^{n}$), and for ($U_{L}=U_{+}^{n}$ and $U_{L}=U_{1}^{n}$) .

Then, the scheme verifies the discrete entropy inequality
\begin{equation} \label{eq:EInum}
  E(U_{j}^{n+1}) \leq E(U_{j}^{n})- \frac{\Delta t^{n}}{\Delta x} (F_{j+1/2,-}^{n}-F_{j-1/2,+}^{n}) \ ,
\end{equation}
where
$$ 
\begin{cases}
\displaystyle F_{j+1/2,-}^{n}= \frac{F(U_{j}^{n})+ F(U_{j+1}^{n})}{2}- \frac{A}{2}(E(U_{j+1}^{n})- E(U_{j-1}^{n})) & \text{ if } j \neq 0, \\
F_{j+1/2,-}^{n}=F_{j+1/2,+}^{n} & \text{ if } j \neq 0, \\
 \displaystyle F_{1/2,-}^{n}= \frac{F(U_{0}^{n})+ F(U_{-}^{n})}{2}- \frac{A}{2}(E(U_{-}^{n})- E(U_{0}^{n})), \\
 \displaystyle F_{1/2,+}^{n}= \frac{F(U_{+}^{n})+ F(U_{1}^{n})}{2}- \frac{A}{2}(E(U_{1}^{n})- E(U_{+}^{n})).
\end{cases}
$$
Moreover, $F_{1/2,-}^{n} \geq F_{1/2,+}^{n}$, thus in particular
\begin{equation} \label{eq:dissipentrop}
 E(U_{0}^{n+1}) + E(U_{1}^{n+1}) \leq E(U_{0}^{n})+ E(U_{1}^{n})- \frac{\Delta t^{n}}{\Delta x} (F_{3/2,-}^{n}-F_{-1/2,+}^{n}).
\end{equation}
\end{prop}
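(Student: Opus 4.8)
The plan is to establish the two assertions of Proposition~\ref{p: EInum1} separately: first the cell-by-cell discrete entropy inequality \eqref{eq:EInum}, then the sharper interface inequality $F_{1/2,-}^{n} \geq F_{1/2,+}^{n}$, from which \eqref{eq:dissipentrop} follows by summing \eqref{eq:EInum} over $j=0$ and $j=1$ and telescoping. For the cells $j \leq -1$ and $j \geq 2$ the update \eqref{eq:update} is the plain Rusanov scheme, so \eqref{eq:EInum} is the classical discrete entropy inequality for Rusanov: one writes $U_j^{n+1}$ as a convex combination of the intermediate HLL-type states associated with the left and right interfaces of cell $j$, invokes convexity of $E$, and controls the contribution of each half-cell by Proposition~\ref{p: EI Rus}. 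The only genuinely new cells are $j=0$ and $j=1$, where one of the two neighbouring fluxes is computed against the ghost state $U_-^n$ or $U_+^n$ instead of an actual cell value.

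For cell $j=0$: by \eqref{eq:no fluct} together with Proposition~\ref{p: existence cla} we have $g_{Rus}(U_0^n, U_-^n) = g_{Rus}(U_+^n, U_1^n)$, and moreover $U_-^n = U_+^n = (\rho_\star, q_\star)$. I would treat the right interface of cell $0$ exactly as an ordinary Rusanov interface between $U_0^n$ and $U_-^n$, so that the half-cell update contributes the flux $F_{1/2,-}^n$ as defined in the statement and Proposition~\ref{p: EI Rus} applies with $U_L = U_0^n$, $U_R = U_-^n$, $A$ as in the hypothesis; the left interface of cell $0$ is a standard interface between $U_{-1}^n$ and $U_0^n$. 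Combining the two half-cell estimates by convexity of $E$ gives \eqref{eq:EInum} for $j=0$; cell $j=1$ is symmetric, using the interface between $U_+^n$ and $U_1^n$ and yielding $F_{1/2,+}^n$.

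It remains to prove $F_{1/2,-}^n \geq F_{1/2,+}^n$, which is the heart of the matter. Write $U_- = U_+ = (\rho_\star,q_\star) =: U_I$. Then
\[
F_{1/2,-}^n - F_{1/2,+}^n
= \frac{F(U_0^n) - F(U_1^n)}{2} + \frac{A}{2}\big( E(U_0^n) + E(U_1^n) - 2E(U_I) \big) ,
\]
after the $F(U_I)$ and $E(U_I)$ terms recombine. Now apply Proposition~\ref{p: EI Rus} twice: to the pair $(U_0^n, U_I)$ it gives $F(U_I) - F(U_0^n) \leq A\big(E(U_I)+E(U_0^n)-2E(U_*^{0,I})\big)$, and to $(U_I, U_1^n)$ it gives $F(U_1^n) - F(U_I) \leq A\big(E(U_I)+E(U_1^n)-2E(U_*^{I,1})\big)$, where $U_*^{0,I}$ and $U_*^{I,1}$ are the corresponding middle states \eqref{eq:U*Rus}. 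Adding these two and rearranging yields precisely $F_{1/2,-}^n - F_{1/2,+}^n \geq A\big(2E(U_I) - E(U_*^{0,I}) - E(U_*^{I,1})\big)$, so it suffices to check $E(U_*^{0,I}) + E(U_*^{I,1}) \geq 2 E(U_I)$. The main obstacle is exactly this last inequality: one must show that the two Rusanov middle states straddle $U_I$ in an entropy-convex sense. Here I would use the explicit structure of Proposition~\ref{p: existence cla}: $U_I = (\rho_\star,q_\star)$ is the average middle state of the pair $(U_0^n,U_1^n)$, and $U_*^{0,I}$, $U_*^{I,1}$ are the middle states of the two sub-pairs; a direct computation of their densities and momenta (the momenta all equal $q_\star$ by the mass-flux balance, so only the densities move) together with strict convexity of $\rho \mapsto E(\rho,q_\star)$ should give $\tfrac12(E(U_*^{0,I}) + E(U_*^{I,1})) \geq E(U_I)$ by Jensen, provided $\rho_\star$ lies between the two sub-middle densities — which I would verify from the sign of $q_\star$ and the formulas for $U_*$. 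Once that is in hand, $F_{1/2,-}^n \geq F_{1/2,+}^n$ follows, and summing \eqref{eq:EInum} at $j=0,1$ with the telescoping cancellation of $F_{3/2,-}^n$ against itself and of $F_{1/2,-}^n$ against $F_{1/2,+}^n$ (using the just-proved inequality in the favourable direction) delivers \eqref{eq:dissipentrop}.
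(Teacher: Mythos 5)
Your treatment of \eqref{eq:EInum} coincides with the paper's (half-cell Riemann-solver decomposition plus Proposition~\ref{p: EI Rus}), but for the key inequality $F_{1/2,-}^{n}\geq F_{1/2,+}^{n}$ you take a genuinely different and more roundabout route. The paper exploits the fact you yourself note, namely that $U_{-}^{n}=U_{+}^{n}=(\rho_{\star},q_{\star})$ is \emph{exactly} the Rusanov intermediate state $U_{*}$ of the pair $(U_{0}^{n},U_{1}^{n})$: a single application of Proposition~\ref{p: EI Rus} to that pair, $F(U_{1}^{n})-F(U_{0}^{n})\leq A\bigl(E(U_{0}^{n})+E(U_{1}^{n})-2E(U_{*}^{n})\bigr)$, is then literally the statement $F_{1/2,-}^{n}-F_{1/2,+}^{n}\geq 0$ after inserting $F(U_{*}^{n})$ and regrouping --- no residual term to control. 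You instead apply the lemma twice, to $(U_{0}^{n},U_{I})$ and $(U_{I},U_{1}^{n})$, which leaves the residual $A\bigl(E(U_{*}^{0,I})+E(U_{*}^{I,1})-2E(U_{I})\bigr)$ to be shown nonnegative (note your displayed lower bound has this term with the opposite sign; the sufficient condition you then state, $E(U_{*}^{0,I})+E(U_{*}^{I,1})\geq 2E(U_{I})$, is the correct one). This route does close, but not quite by the argument you sketch: the momenta of $U_{*}^{0,I}$ and $U_{*}^{I,1}$ are \emph{not} all equal to $q_{\star}$ in general, so a one-dimensional Jensen argument in $\rho$ at fixed $q_{\star}$ is not available. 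What saves you is the exact vector identity
\begin{equation*}
U_{*}^{0,I}+U_{*}^{I,1}=\frac{U_{0}^{n}+U_{1}^{n}}{2}+U_{I}-\frac{1}{2A}\bigl(f(U_{1}^{n})-f(U_{0}^{n})\bigr)=2U_{I},
\end{equation*}
which follows from \eqref{eq:U*Rus} because $U_{I}=U_{*}$ is the middle state of $(U_{0}^{n},U_{1}^{n})$ with the \emph{same} $A$; midpoint convexity of the full entropy $E$ on $\Omega$ then gives the needed inequality with no condition on the ordering of the densities. So your plan is repairable and correct, but it reproves, via two applications and a convexity identity, what the paper obtains in one line by recognizing $U_{\pm}^{n}$ as the intermediate state of the outer pair; the only thing your detour buys is that it would survive a choice of $U_{\pm}^{n}$ other than $U_{*}$, provided the midpoint identity (or an inequality replacing it) still held.
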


\begin{proof} Let us first recall that the result away from the particle ($j \neq 0$) follows from the interpretation of the Rusanov scheme~\eqref{eq:fluxRus} as an approximate Riemann solver with wave speeds $-A$ and $A$, see Proposition~\ref{eq: EI Rus}.
With that interpretation, inequality~\eqref{eq: EI Rus} yields~\eqref{eq:EInum}, see~\cite{GR96}. For $j=0$, we just have to distinguish between the approximate Riemann solver used on the left of the interface, which corresponds to $U_{L}=U_{0}^{n}$ and $U_{R}=U_{-}^{n}$ and yields~\eqref{eq:EInum} for $j=0$, and the one used on the right of the interface, for which $U_{L}=U_{+}^{n}$ and $U_{R}=U_{1}^{n}$ and which yields~\eqref{eq:EInum} for $j=1$. 

It remains to prove that $F_{1/2,-}^{n} \geq F_{1/2,+}^{n}$. We start with the classical entropy relation~\eqref{eq: EI Rus} with $U_{L}=U_{0}^{n}$ and $U_{R}=U_{1}^{n}$
 $$F(U_{0}^{n})-F(U_{1}^{n}) \leq -A(E(U_{*}^{n})-E(U_{0}^{n}))+A(E(U_{1}^{n})-E(U_{*}^{n})). $$
 Introducing $F(U_{*}^{n})$ in the left hand side, and reorganizing the inequality, we obtain
$$ 
\begin{aligned}
  \frac{F(U_{1}^{n})+F(U_{*}^{n})}{2}& -\frac{A}{2} \big(E(U_{1}^{n})-E(U_{*}^{n})\big) \\
  	&\leq \frac{F(U_{0}^{n})+F(U_{*}^{n})}{2}  -\frac{A}{2} \big(E(U_{*}^{n})-E(U_{0}^{n}) \big) 
\end{aligned}
$$
and the result.
\end{proof}

We now state a similar property for the other solution in Proposition~\ref{p: existence cla}.
\begin{prop} Consider the scheme~\eqref{eq:update} with 
$$U_{-}^{n}=(\rho_{*} - \sign(q_{*}) r, q_{\star}) \ \text{ and } \ U_{+}^{n}=(\rho_{*} + \sign(q_{*}) r, q_{\star}) $$
defined (if possible) by the second point of Proposition~\ref{p: existence cla} with $U_{L}=U_{0}^{n}$ and $U_{R}=U_{1}^{n}$.
Suppose that $A$ verifies the same subcharacteristic condition than in Proposition~\ref{p: EInum1}.
Then~\eqref{eq:EInum} holds. Moreover if the underlying approximate Riemann solver verifies the entropy inequality
\begin{equation} \label{eq: entrop 1/2}
\begin{aligned}
  F(U_{1})-F(U_{0}) \leq A(E(U_{0})+E(U_{1})-E(U_{-})-E(U_{+})), 
\end{aligned} 
\end{equation}
then $F_{1/2,-}^{n} \geq F_{1/2,+}^{n}$ and~\eqref{eq:dissipentrop} holds.
\end{prop}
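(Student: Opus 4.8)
The plan is to follow the proof of Proposition~\ref{p: EInum1} almost verbatim, the only change being the value of the ghost states $U_{-}^{n}$ and $U_{+}^{n}$; so the task is mainly to track where that value actually enters and to supply the one new ingredient.

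First I would establish the discrete entropy inequality~\eqref{eq:EInum}. Exactly as in Proposition~\ref{p: EInum1}, each cell is updated by a conservative scheme whose numerical flux across every interface is the Rusanov flux of the two states straddling it: the neighbouring cell values for $j\leq -1$ and $j\geq 2$, the pairs $(U_{-1}^{n},U_{0}^{n})$ and $(U_{0}^{n},U_{-}^{n})$ for $j=0$, and the pairs $(U_{+}^{n},U_{1}^{n})$ and $(U_{1}^{n},U_{2}^{n})$ for $j=1$ (recall that by stationarity $g_{Rus}(U_{0}^{n},U_{-}^{n})=f(U_{0}^{n})$ and $g_{Rus}(U_{+}^{n},U_{1}^{n})=f(U_{1}^{n})$). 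Since $A$ satisfies~\eqref{eq: subchar1} for all these pairs by hypothesis, Proposition~\ref{p: EI Rus} --- equivalently the classical Jensen-type entropy estimate for the underlying approximate Riemann solver, see~\cite{GR96} --- yields~\eqref{eq:EInum} cell by cell; the particular form of $(U_{-}^{n},U_{+}^{n})$ is irrelevant at this stage.

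Next I would prove $F_{1/2,-}^{n}\geq F_{1/2,+}^{n}$. Expanding the definitions of $F_{1/2,-}^{n}$ and $F_{1/2,+}^{n}$ gives the identity
$$
2\big(F_{1/2,-}^{n}-F_{1/2,+}^{n}\big)=\big(F(U_{0}^{n})-F(U_{1}^{n})\big)+\big(F(U_{-}^{n})-F(U_{+}^{n})\big)-A\big(E(U_{-}^{n})+E(U_{+}^{n})-E(U_{0}^{n})-E(U_{1}^{n})\big).
$$
I would then bound the right-hand side below by combining two ingredients: hypothesis~\eqref{eq: entrop 1/2}, applied with $U_{0}=U_{0}^{n}$, $U_{1}=U_{1}^{n}$, $U_{-}=U_{-}^{n}$, $U_{+}=U_{+}^{n}$ and rewritten as $F(U_{0}^{n})-F(U_{1}^{n})\geq A\big(E(U_{-}^{n})+E(U_{+}^{n})-E(U_{0}^{n})-E(U_{1}^{n})\big)$, which controls the first and last groups; and the fact that $F(U_{-}^{n})-F(U_{+}^{n})\geq 0$, which holds because $(U_{-}^{n},U_{+}^{n})\in\mathcal{G}_{0}$ carries precisely the entropy constraint $F(U_{+})-F(U_{-})\leq 0$ of~\eqref{eq:Gcla}. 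Adding these two inequalities makes the right-hand side non-negative.

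Finally, summing~\eqref{eq:EInum} over $j=0$ and $j=1$, the contributions at the interfaces $x_{-1/2}$ and $x_{3/2}$ survive while the two coupling fluxes collapse into $-\tfrac{\Delta t^{n}}{\Delta x}\big(F_{1/2,-}^{n}-F_{1/2,+}^{n}\big)$, which I would drop using the inequality just proved; this gives~\eqref{eq:dissipentrop}. I expect no real obstacle here: the only genuinely new point relative to Proposition~\ref{p: EInum1} is that, since now $U_{-}^{n}\neq U_{+}^{n}$, one can no longer insert the intermediate state $U_{*}^{n}$ of the Riemann problem between $U_{0}^{n}$ and $U_{1}^{n}$ and split the Rusanov entropy relation~\eqref{eq: EI Rus} as was done there; that manoeuvre is exactly what the extra hypothesis~\eqref{eq: entrop 1/2} replaces, so spotting the correct combination of~\eqref{eq: entrop 1/2} with the $\mathcal{G}_{0}$ entropy constraint in the second step is the one mildly delicate point, and everything else is bookkeeping.
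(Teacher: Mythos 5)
Your proof is correct and follows essentially the same route as the paper's (much terser) argument: \eqref{eq:EInum} is obtained cell by cell from Proposition~\ref{p: EI Rus} applied to the pairs straddling each interface, and $F_{1/2,-}^{n}\geq F_{1/2,+}^{n}$ comes from expanding the definitions of $F_{1/2,\pm}^{n}$ and adding hypothesis~\eqref{eq: entrop 1/2} to the $\mathcal{G}_{0}$ entropy constraint $F(U_{+}^{n})\leq F(U_{-}^{n})$, which is exactly the combination the paper leaves implicit. The only blemish is your parenthetical claim that $g_{Rus}(U_{0}^{n},U_{-}^{n})=f(U_{0}^{n})$ and $g_{Rus}(U_{+}^{n},U_{1}^{n})=f(U_{1}^{n})$ ``by stationarity'': the ghost states here are defined through \eqref{eq:no fluct}, which only forces the two interface fluxes to coincide, not to equal $f(U_{0}^{n})$ or $f(U_{1}^{n})$ --- fortunately that remark is never used in your argument.
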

\begin{proof}
The fact that~\eqref{eq:EInum} holds is obtained as before. Indeed, as the subcharacteristic condition~\eqref{eq: EI Rus} with $U_{L}=U_{0}^{n}$ and $U_{R}=U_{-}^{n}$, we have
$$ F(U_{-}^{n})-F(U_{0}^{n}) \leq A\big(E(U_{-}^{n})+ E(U_{*0}^{n})- 2 E(U_{*0}^{n}) \big), $$
where $U_{*0}^{n}= \frac{U_{L}^{n}+U_{-}^{n}}{2}- \frac{1}{2A}(f(U_{-}^{n})-f(U_{0}^{n}))$, which yields~\eqref{eq:EInum} for $j=0$ (see once again~\cite{GR96}). 
The reasoning is similar on the cell $1$, because the subcharacteristic condition implies that $A$ is larger than $|u_{+}|+c$. 
The last part of the proposition follows from the definitions of $F_{1/2, \pm}^{n}$, and the fact that $F(U_{+}^{n}) \leq F(U_{-}^{n})$.
\end{proof}

We now check that the scheme is consistent, i.e. that if $U_{L}=U_{R}$, the only possible choice in Proposition~\ref{p: existence cla} is $U_{-}=U_{+}=U_{R}=U_{L}$.
\begin{prop}
 If $U_{L}=U_{R}$, the second solution of~\ref{p: existence cla} does not verify inequality~\eqref{eq: entrop 1/2}.
\end{prop}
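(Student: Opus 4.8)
The plan is to use that $U_L=U_R$ forces the Rusanov intermediate state to collapse onto the common value, so the left-hand side of~\eqref{eq: entrop 1/2} vanishes, and then to invoke strict convexity of the entropy to show that the right-hand side is strictly negative for the second solution of Proposition~\ref{p: existence cla}.

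First I would set $U_L=U_R=:U=(\rho,q)$ and observe that, because $f(U_R)-f(U_L)=0$, formula~\eqref{eq:U*Rus} gives $U_\star=U$, hence $\rho_\star=\rho$ and $q_\star=q$. For the second solution of Proposition~\ref{p: existence cla} to be available we must be in the regime $\rho_\star^2-q_\star^2/c^2>0$ and $q_\star\neq0$, that is $0<|q|<c\rho$; then $r=\sqrt{\rho^2-q^2/c^2}$ satisfies $0<r<\rho$, so $\rho-r>0$ and the two states produced by~\eqref{eq: sol noncla} are, in one order or the other, $(\rho-r,q)$ and $(\rho+r,q)$, both in $\Omega$. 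In particular $E(U_-)+E(U_+)=E\big((\rho-r,q)\big)+E\big((\rho+r,q)\big)$ irrespective of $\sign(q)$.

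Next I would evaluate the two sides of~\eqref{eq: entrop 1/2} with $U_0=U_1=U$: the left-hand side is $F(U)-F(U)=0$, while the right-hand side is $A\big(2E(U)-E(U_-)-E(U_+)\big)$. Let $\phi(s)=E(s,q)$ for $s>0$. Being the restriction of the strictly convex entropy $E$ to the affine line $\{q=\text{const}\}$, $\phi$ is strictly convex; equivalently $\phi''(s)=\partial_\rho^2E(s,q)=\frac{q^2}{s^3}+\frac{c^2}{s}>0$. Since $\rho-r\neq\rho+r$ and these two points have arithmetic mean $\rho$, strict convexity yields $\phi(\rho-r)+\phi(\rho+r)>2\phi(\rho)$, i.e. $E(U_-)+E(U_+)>2E(U)$. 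Hence the right-hand side of~\eqref{eq: entrop 1/2} is strictly negative while its left-hand side is $0$, so the inequality is violated; this proves that the second solution does not satisfy~\eqref{eq: entrop 1/2}.

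The argument has no real obstacle; the only point needing a little attention is to make sure the second solution is genuinely distinct from $U$ and lies in $\Omega$, which amounts to checking $0<r<\rho$, and this is exactly guaranteed by the subsonic condition $0<|q|<c\rho$ under which Proposition~\ref{p: existence cla} produces that second solution.
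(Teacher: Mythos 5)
Your proof is correct, and it takes a genuinely different, more elementary route than the paper. The paper argues by contradiction through Proposition~\ref{p: EI Rus}: it observes that $f(U_-)=f(U_+)$ forces the Rusanov middle state of the \emph{reversed} pair $(U_+,U_-)$ to be $U_L$, applies the discrete entropy inequality~\eqref{eq: EI Rus} to that pair, and combines it with the interface entropy condition $F(U_+)\leq F(U_-)$ of $\mathcal{G}_0$ to conclude that~\eqref{eq: entrop 1/2} could only hold with equality, which by the rigidity clause of Proposition~\ref{p: EI Rus} forces $U_-=U_+$. You instead evaluate both sides of~\eqref{eq: entrop 1/2} directly: the left-hand side vanishes since $U_0=U_1$, and the right-hand side equals $A\big(2E(U)-E(U_-)-E(U_+)\big)$, which is strictly negative by strict convexity of $\rho\mapsto E(\rho,q)$ because $\rho_\pm=\rho\pm r$ are distinct and average to $\rho$ (you even verify $\partial_\rho^2 E=\frac{q^2}{\rho^3}+\frac{c^2}{\rho}>0$ explicitly). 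In effect you have unwound the Jensen-type inequality hiding inside Proposition~\ref{p: EI Rus} and applied it at the midpoint, where it is exact; this buys a self-contained argument that needs neither the entropy flux $F$ nor the interface entropy condition, and it also sidesteps the slightly garbled inequality chain in the paper's version. The paper's route, by contrast, is shorter on the page and reuses an already-established structural fact about the Rusanov solver, which generalizes more readily beyond this specific entropy. Your attention to the existence regime $0<|q|<c\rho$ and to the sign ambiguity in~\eqref{eq: sol noncla} (which is immaterial since $E(U_-)+E(U_+)$ is symmetric) is exactly right.
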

\begin{proof} Let us first notice that if $U_{L}=U_{R}$, then $U_{*}=U_{L}=U_{R}$.
 We have to eliminate the solution $U_{-}=U_{*} \pm (r, 0)$ and $U_{+}=U_{*} \mp (r,0)$ when it exists. 
 The intermediate state in the Rusanov approximate Riemann solver between the left state $U_{+}$ and the right state $U_{-}$ is 
 $$ \frac{U_{-}+U_{+}}{2} - \frac{1}{2A}(f(U_{-})-f(U_{+})) = U_{L}\ .$$
 By definition of~$\mathcal{G}_{0}$ we have equality of the fluxes $f(U_{+})=f(U_{-})$ 
 and under the subcharacteristic condition~\eqref{eq: subchar1}, we obtain
  $$ 
\begin{aligned}
  F(U_{-})-F(U_{+}) &\leq -A(E(U_{L})-E(U_{+}) + A(E(U_{-})-E(U_{L}))  \\
  	& \leq A (E(U_{+})+ E(U_{-}) - 2 E(U_{L}))\ .
\end{aligned}
 $$
Thus, we have equality in~\eqref{eq: entrop 1/2}, which by Proposition~\ref{p: EI Rus} only holds if $U_{-}=U_{+}$.
\end{proof}

Next we prove that the scheme is exact on isolated admissible stationary shocks, which are the piecewise constant equilibrium states associated to $\mathcal{G}_{0}$.
\begin{prop} \label{p: exashock} Consider a Riemann problem~\eqref{eq:RP}, where  $U_{L}$ and $U_{R}$ are such that
 $$ q_{L}=q_{R}\neq 0, \ u_{L}>u_{R}, \  \rho_{L} \neq \rho_{R} \text{ and } \ \rho_{L} \rho_{R}= \frac{q_{L}q_{R}}{c^{2}}. $$
 The exact solution of this Riemann problem is
 $$U(x,t)= U_{L} \mathbf{1}_{x<0} + U_{R} \mathbf{1}_{x>0}. $$
 Then for all $n \in \N$, 
 $$U_{j}^{n}= 
\begin{cases}
  U_{L} & \text{ if } j \leq 0, \\
   U_{R} & \text{ if } j \geq 1.
\end{cases}
 $$
\end{prop}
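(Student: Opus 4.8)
The plan is to argue by induction on $n$, the base case $n=0$ being exactly the hypothesis on the initial data. Assume $U_j^n = U_L$ for $j \le 0$ and $U_j^n = U_R$ for $j \ge 1$. The heart of the matter is to identify the interface values $(U_-^n,U_+^n)$ produced by the coupling procedure when the adjacent cells are $U_0^n = U_L$ and $U_1^n = U_R$, and to show that this pair is $(U_L,U_R)$ itself.

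First I would extract two algebraic consequences of the hypotheses. Writing $q := q_L = q_R$ and $\eta := q^2/\rho + c^2\rho$, one has $\eta_L - \eta_R = (\rho_L-\rho_R)\bigl(c^2 - q^2/(\rho_L\rho_R)\bigr) = 0$ because $\rho_L\rho_R = q_Lq_R/c^2$; together with $q_L=q_R$ this gives $f(U_L) = f(U_R)$, i.e. $(U_L,U_R)$ satisfies the Rankine--Hugoniot relations of a stationary shock. Moreover $u_Lu_R = q^2/(\rho_L\rho_R) = c^2$, so the assumption $u_L > u_R$ (which, since $q\ne0$ and $\rho_L\ne\rho_R$, is strict) forces $u_- = u_L \ge c \ge u_R = u_+$ when $q>0$, resp. $u_+ = u_R \le -c \le u_L = u_-$ when $q<0$. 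This is precisely the entropy admissibility condition recalled in the proof of Proposition~\ref{p: existence cla}, so $F(U_R) \le F(U_L)$ and hence $(U_L,U_R)\in\mathcal{G}_0$.

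Next I would invoke Proposition~\ref{p: existence cla} with $U_L = U_0^n$, $U_R = U_1^n$. Since $f(U_L)=f(U_R)$, the auxiliary state \eqref{eq:U*Rus} is $U_\star = \tfrac12(U_L+U_R)$, so $q_\star = q$ and $\rho_\star^2 - q_\star^2/c^2 = \tfrac14(\rho_L-\rho_R)^2 > 0$ because $\rho_L\ne\rho_R$. Thus we are in the second case of the proposition, with $r = \tfrac12|\rho_L-\rho_R|$, and a direct computation of $\rho_- = \rho_\star - \sign(q_\star)r$, $\rho_+ = \rho_\star + \sign(q_\star)r$ gives exactly $U_- = U_L$, $U_+ = U_R$ (using $\rho_L<\rho_R$ when $q>0$ and $\rho_L>\rho_R$ when $q<0$, both of which follow from $u_L>u_R$). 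This pair is the admissible stationary shock, hence the branch retained by the coupling procedure, and it trivially solves (\ref{eq:inG}-\ref{eq:no fluct}) because $g_{Rus}(U_L,U_L)-f(U_L) + f(U_R)-g_{Rus}(U_R,U_R) = 0$ by consistency of the Rusanov flux.

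Finally I would substitute $(U_-^n,U_+^n)=(U_L,U_R)$ into \eqref{eq:update}. For $j\le-1$ every argument appearing in the two fluxes equals $U_L$, and for $j\ge2$ every argument equals $U_R$, so consistency $g_{Rus}(U,U)=f(U)$ gives $U_j^{n+1}=U_j^n$. For the central cells, $g_{Rus}(U_0^n,U_-^n)=g_{Rus}(U_L,U_L)=f(U_L)=g_{Rus}(U_{-1}^n,U_0^n)$ and $g_{Rus}(U_+^n,U_1^n)=g_{Rus}(U_R,U_R)=f(U_R)=g_{Rus}(U_1^n,U_2^n)$, whence $U_0^{n+1}=U_L$ and $U_1^{n+1}=U_R$. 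This closes the induction. The only genuinely delicate step is the third paragraph: one must check that the data force the \emph{shock} branch of Proposition~\ref{p: existence cla} --- existence of the second solution, the correct matching of $\rho_-,\rho_+$ with $\rho_L,\rho_R$, and its entropy admissibility --- rather than the spurious branch $U_-=U_+=U_\star$.
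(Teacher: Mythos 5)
Your proof is correct and follows essentially the same route as the paper: show that the second (stationary-shock) branch of Proposition~\ref{p: existence cla} exists, identify it with $(U_L,U_R)$, check its entropy admissibility, and conclude by consistency of the flux. The only cosmetic difference is that you establish $\rho_{\star}^{2}-q_{\star}^{2}/c^{2}>0$ by the direct identity $\rho_{\star}^{2}-q_{\star}^{2}/c^{2}=\tfrac14(\rho_{L}-\rho_{R})^{2}$, whereas the paper uses the minimum of $\rho\mapsto\rho+q_{\star}^{2}/(c^{2}\rho)$; both are fine.
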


\begin{proof}
 To prove that the solution remains constant we have to check that $U_{1/2,-}^{0}=U_{L}$, $U_{1/2,+}^{0}=U_{R}$ and that~\eqref{eq: entrop 1/2} holds.
  First we note that (with the notation of Proposition~\ref{p: existence cla})
 $ \rho_{\star}= \frac{\rho_{L}+\rho_{R}}{2} \ \text{ and } q_{\star}=q_{L}=q_{R}. $
 Thus $\rho_{\star}$ is larger than $|q_{\star}|/c$ if and only if $\frac{\rho_{L}+ \frac{q_{\star}^{2}}{c^{2}\rho_{L}}}{2} \geq \frac{|q_{\star}|}{c}$. 
 An elementary computation shows that $\rho \mapsto \rho + \frac{q_{\star}^{2}}{c^{2}\rho}$ reaches its only minimum $\frac{2 |q_{\star}|}{c}$ for $\rho= \frac{|q_{\star}|}{c}$, and the result follows. 
 Thus the second solution of Proposition~\ref{p: existence cla} exists, and it is easy to check that $U_{-}=U_{L}$ and $U_{+}=U_{R}$. 
 Furthermore, Equation~\eqref{eq: entrop 1/2} reduces to $F(U_{R}) \leq F(U_{L})$, which is true for admissible shocks (it is shown in the proof of Proposition~\ref{p: existence cla}).
\end{proof}

\subsection{The Rusanov flux with $\mathcal G_\lambda$}
 \label{Q: Rus part}
Now we investigate the interface conditions~\eqref{eq: Gpart} for the fluid / particle coupling.
We begin with an observation analogue to Proposition~\ref{p: existence cla}.
\begin{prop} \label{existence lambda} Let $\tilde{\mathcal{G}}_{\lambda}$ be the set
$$ \label{eq:Gparttilde}
\tilde{ \mathcal{G}}_{\lambda} = 
\begin{pmatrix} 
q_{-}=q_{+}=: q \\
\left( \frac{q^{2}}{\rho_{-}}+c^{2} \rho_{-} \right) -  \left( \frac{q^{2}}{\rho_{+}}+c^{2} \rho_{+} \right) = \lambda q 
\end{pmatrix}.
$$
Then for $g=g_{Rus}$ and every $(U_{L},U_{R})$ in $(\R_{+} \times \R)^2$, there exists at least one and at most three solutions $(U_{-},U_{+})$ of~\eqref{eq:no fluct} in $\tilde{ \mathcal{G}}_{\lambda}$. 
\end{prop}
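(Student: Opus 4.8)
The plan is to reduce the coupling equation~\eqref{eq:no fluct} with the Rusanov flux to a scalar polynomial equation in a single unknown and count its real roots in the admissible range. First I would write out \eqref{eq:no fluct} componentwise. Since $g_{Rus}(U_0,U_-) - f(U_-) = \frac{f(U_0)-f(U_-)}{2} - \frac{A}{2}(U_- - U_0)$ and similarly $f(U_+) - g_{Rus}(U_+,U_1) = \frac{f(U_+)-f(U_1)}{2} - \frac{A}{2}(U_+ - U_1)$, equation~\eqref{eq:no fluct} reads
\begin{equation*}
\frac{f(U_0) - f(U_-) + f(U_+) - f(U_1)}{2} + \frac{A}{2}\big( U_0 + U_1 - U_- - U_+ \big) = 0 .
\end{equation*}
Writing $U = (\rho,q)$ and $f(U) = (q, \eta)$ with $\eta = q^2/\rho + c^2\rho$, the first (mass) component gives $q_0 - q_- + q_+ - q_1 + A(\rho_0 + \rho_1 - \rho_- - \rho_+) = 0$, and using $q_- = q_+ =: q$ from $\tilde{\mathcal G}_\lambda$ this linearly determines $\tfrac12(\rho_- + \rho_+)$ in terms of $q$. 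The second (momentum) component, using again $q_- = q_+ = q$, reads $\eta_0 - \eta_- + \eta_+ - \eta_1 + A(q_0 + q_1 - 2q) = 0$, which after substituting the charge relation $\eta_- - \eta_+ = \lambda q$ of $\tilde{\mathcal G}_\lambda$ becomes an affine relation between $\eta_- + \eta_+$ and $q$, hence determines $\tfrac12(\eta_- + \eta_+)$ as an affine function of $q$. So both $\tfrac12(\rho_- + \rho_+)$ and $\tfrac12(\eta_- + \eta_+)$ are known affine functions of the scalar $q$, exactly mirroring the structure exploited in Proposition~\ref{p: existence cla} (which is the case $\lambda = 0$, $A$ fixed).

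Next I would parametrize, as in the proof of Proposition~\ref{p: existence cla}, $\rho_\pm = S(q) \mp \tilde r$ where $S(q) := \tfrac12(\rho_-+\rho_+)$ is the affine function just found, so that the unknowns are now $q$ and the spread $\tilde r \in (-S(q), S(q))$. The two remaining scalar constraints are: (i) $\eta_- - \eta_+ = \lambda q$, and (ii) $\tfrac12(\eta_- + \eta_+) = $ the affine function of $q$ from the momentum balance. Both $\eta_- + \eta_+$ and $\eta_- - \eta_+$ are rational functions of $(q, \tilde r)$; clearing denominators ($\rho_-\rho_+ = S(q)^2 - \tilde r^2$) turns (i) and (ii) into two polynomial equations. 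Eliminating $\tilde r^2$ between them — (i) is linear in $\tilde r$ after clearing denominators, while (ii) involves only $\tilde r^2$ — yields a single polynomial equation $P(q) = 0$. Tracking degrees carefully, $P$ should have degree three in $q$ (the cubic coming from the $q^3$-type terms produced when $q\,\eta$-products meet the quadratic in $\rho$), which gives the "at most three solutions" bound; at least one real root exists because a real cubic always has at least one real root, and one then has to check that this root lies in the admissible region $\rho_- , \rho_+ > 0$, i.e. $|\tilde r| < S(q)$ and $S(q) > 0$ — a sign/monotonicity argument analogous to the one used for the existence of the second solution in Proposition~\ref{p: existence cla}, using that $\rho \mapsto q^2/\rho + c^2\rho$ is decreasing on $(0, |q|/c)$ and blows up at $0$.

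The main obstacle I expect is the bookkeeping in the elimination step: showing that after clearing denominators the resulting polynomial in $q$ has degree exactly three (not higher), and, more delicately, proving that at least one of its real roots actually produces an admissible pair $(\rho_-, \rho_+) \in \R_+ \times \R_+$ rather than a spurious root with $\rho_- \le 0$ or $\rho_+ \le 0$. For the latter I would argue by an intermediate-value / degree argument: express one of the equations as a continuous function on the open interval of admissible $q$ (bounded by where $S(q)$ or $\rho_\pm$ hit zero) and show it changes sign across that interval — using the blow-up of $\eta$ as $\rho \to 0^+$ to control the endpoints — so a root is trapped inside. The rest (consistency with the $\lambda=0$ case, reduction of the momentum relation) is routine algebra and does not require smallness of the data, which is the point of the proposition.
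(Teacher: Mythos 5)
Your reduction goes wrong at the momentum component, and the error propagates through the whole elimination strategy. The combination of traces appearing in the momentum line of \eqref{eq:no fluct} is $\eta_{0}-\eta_{-}+\eta_{+}-\eta_{1}=\eta_{0}-\eta_{1}-(\eta_{-}-\eta_{+})$, i.e. it contains the \emph{difference} $\eta_{-}-\eta_{+}$, not the sum. Substituting the charge relation $\eta_{-}-\eta_{+}=\lambda q$ therefore does not produce "an affine relation between $\eta_{-}+\eta_{+}$ and $q$"; the interface traces drop out entirely and you are left with
$$\eta_{0}-\eta_{1}-\lambda q + A(q_{0}+q_{1}-2q)=0, \quad\text{i.e.}\quad q=\frac{A(q_{0}+q_{1})+\eta_{0}-\eta_{1}}{\lambda+2A},$$
which determines $q$ \emph{uniquely and explicitly} (the coefficient $\lambda+2A$ is positive). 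Your constraint (ii) on $\eta_{-}+\eta_{+}$ does not exist, so the two-equation elimination in $(q,\tilde r)$ and the resulting "cubic in $q$" are built on a phantom equation; note also that the mass component fixes $\rho_{-}+\rho_{+}=\rho_{0}+\rho_{1}+\frac{q_{0}-q_{1}}{A}$ as a constant, not as a genuinely $q$-dependent quantity.

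The correct count comes from the one equation that actually remains. With $q$ and $\rho_{\star}=\frac{1}{2}(\rho_{-}+\rho_{+})$ fixed by the two components of \eqref{eq:no fluct}, write $\rho_{\mp}=\rho_{\star}\mp r$ and impose the charge relation as an equation in $r$ alone:
$$\left(\frac{q^{2}}{\rho_{\star}-r}+c^{2}(\rho_{\star}-r)\right)-\left(\frac{q^{2}}{\rho_{\star}+r}+c^{2}(\rho_{\star}+r)\right)=\lambda q .$$
Clearing the denominators $(\rho_{\star}-r)(\rho_{\star}+r)=\rho_{\star}^{2}-r^{2}$ gives a cubic in $r$, hence at most three solutions; and the left-hand side tends to $+\infty$ as $r\to\rho_{\star}^{-}$ and to $-\infty$ as $r\to-\rho_{\star}^{+}$ (for $q\neq 0$; for $q=0$ it is $-2c^{2}r$), so the intermediate value theorem traps at least one root in $(-\rho_{\star},\rho_{\star})$. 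This also dissolves the admissibility worry you flag at the end: because the root is located inside $(-\rho_{\star},\rho_{\star})$, the positivity $\rho_{\pm}=\rho_{\star}\pm r>0$ is automatic, and no degree-tracking or separate sign argument in $q$ is needed.
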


\begin{proof}
The first interface condition implies that the mass is conserved across the particle $q_{-}=q_{+}=:q$. 
The second equation quantifies the loss of momentum $\eta_{-} - \eta_{+}=\lambda q$. 
System~\eqref{eq:no fluct} is designed as the numerical counterpart of those interface conditions. 
It says that the mass fluxes at the interface are equal (numerical conservation of the density) and quantifies the jump of the momentum flux. 
For the Rusanov flux these conditions are
$$ 
\begin{cases}
     q+q_{0} -A(\rho_{-} -\rho_{0}) = q_{1}+q -A(\rho_{1} -\rho_{+}), \\
     \frac{1}{2}\big(\eta_{-}+\eta_{0} -A(q -q_{0})\big)- \frac{1}{2}\big(\eta_{1}+\eta_{+} -A(q_{1} -q)\big)= \eta_{-}-\eta_{+} .
\end{cases}
$$
The second line yields
$$\eta_{0}- \eta_{1} - 2 A q + A(q_{0}+q_{1})= \eta_{-}- \eta_{+}= \lambda q$$
and thus
$$ q= \frac{A(q_{0}+q_{1})}{\lambda+2A}+ \frac{\eta_{0}-\eta_{1}}{\lambda+2A}. $$
Note that if $\lambda=0$, we recover the classical case~\eqref{eq: q cla}, while if $((\rho_{0},q_{0}),(\rho_{1},q_{1}))$ belongs to $\mathcal{G}_{\lambda}$, we recover $q=q_{0}=q_{1}$.
Concerning the first equation, we obtain as before
$$ \frac{\rho_{-} + \rho_{+}}{2}=\frac{\rho_{0} + \rho_{1}}{2} + \frac{q_{0}-q_{1}}{2A}=:\rho_{\star}. $$
Thus we can express $\rho_{-}=\rho_{\star}-r$ and $\rho_{+}=\rho_{\star}+r$ for some real $r$ in $(-\rho_{\star},\rho_{\star})$. 
The interface condition associated to the momentum equation  $\eta_{-}-\eta_{+}= \lambda q$ gives
\begin{equation} \label{eq: r part}
  \left(\frac{q^2}{\rho_{\star}-r} + c^2 (\rho_{\star}-r) \right) - \left(\frac{q^2}{\rho_{\star}+r} + c^2 (\rho_{\star}+r) \right) = \lambda q.
\end{equation}
Interpreting this as a function of $r$, the left hand side tends to $+ \infty$ when $r$ tends to $\rho_{\star}$ and to $- \infty$ when $r$ tends to $-\rho_{\star}$. 
Thus, Equation~\eqref{eq: r part} admits at least one solution in $(-\rho_{\star}, \rho_{\star})$, and at most three as it is equivalent to find the roots of a third degree polynomial expression.
\end{proof}

The evolution of the roots of~\eqref{eq: r part} is depicted in Figure~\ref{F: Roots} for two different initial data. 
For $\rho_{L}=\rho_{R}= 5$ and $q_{L}=q_{R}=2.5$, $c=1$ and $\lambda = 0$ there are three roots. As $\lambda$ becomes larger, the root becomes unique. The root is unique for every choice of $\lambda$ when $\rho_{L}=\rho_{R}= 1$ and $q_{L}=q_{R}=2.5$.
Note that in both cases, $q_{\star}$ tends to $0$ as $\lambda$ tends to $+ \infty$. 
This is expected, as the obstacle acts more and more like a rigid wall. 
The root is unique when the initial data is supersonic.
\begin{figure}[h!t]
 \includegraphics[clip=true, trim=1cm 0cm 1.8cm 1.5cm, width=0.49\linewidth]{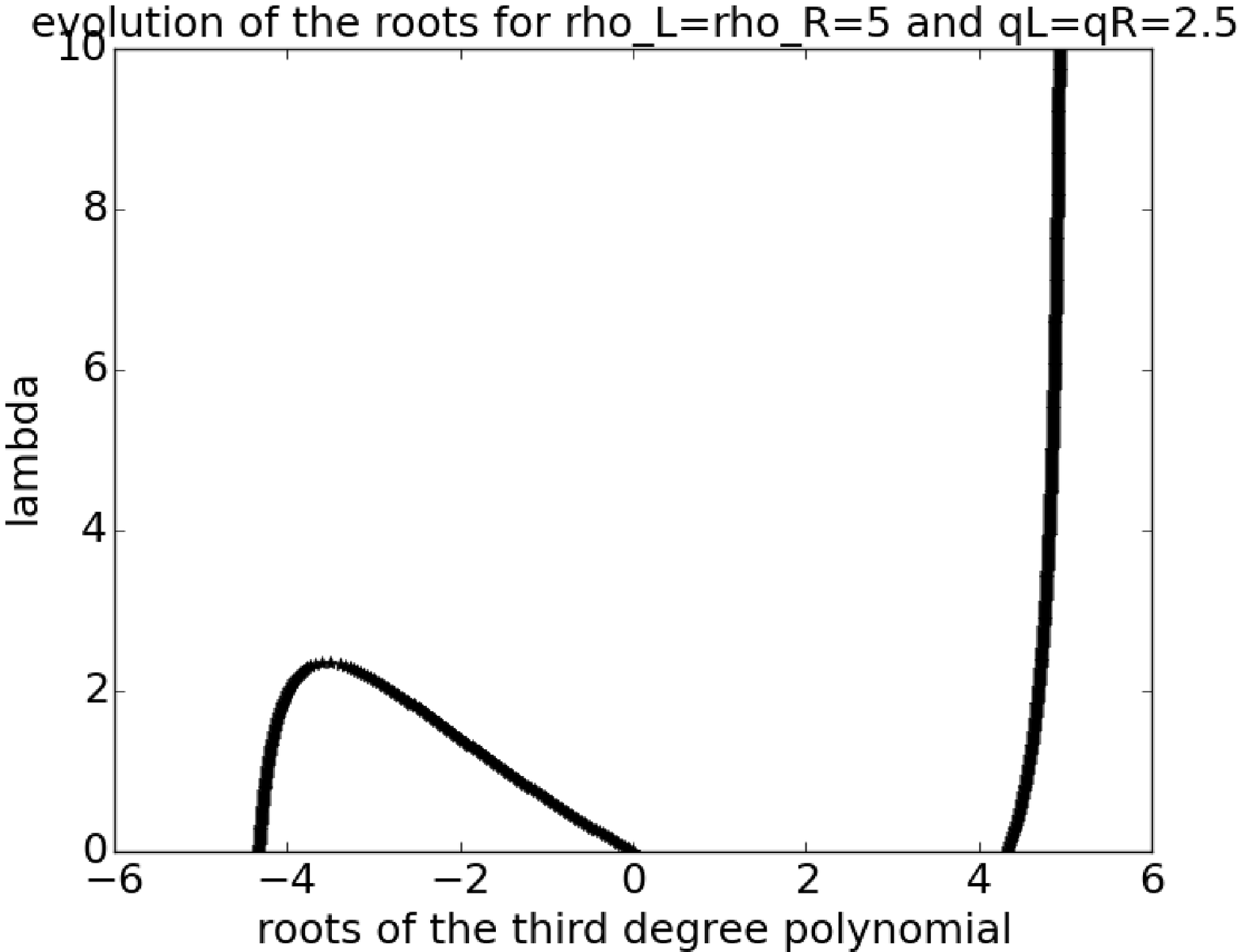}
 \includegraphics[clip=true, trim=1cm 0cm 1.8cm 1.5cm, width=0.49\linewidth]{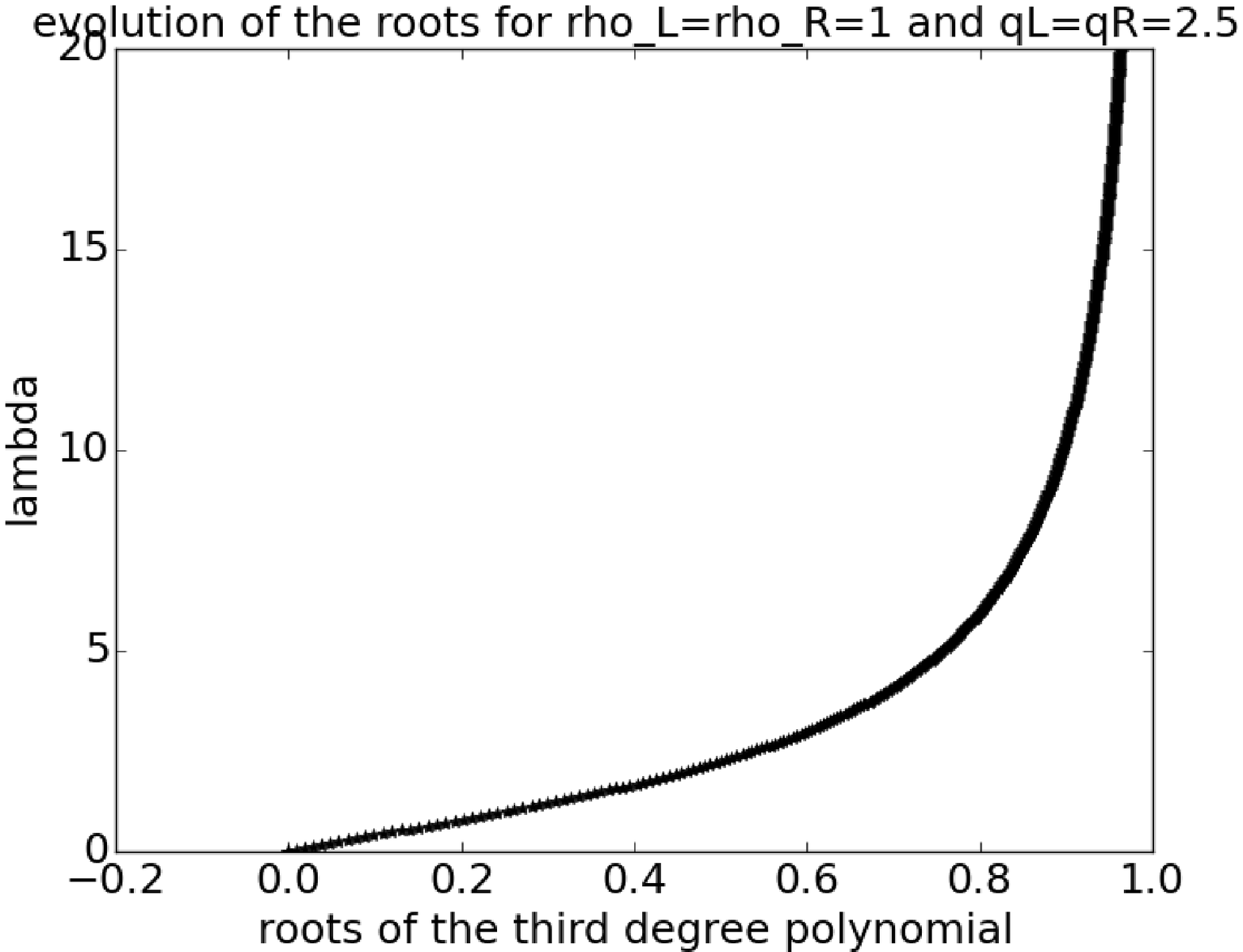}
 \caption{When the initial data is subsonic (left), there is $3$ root for small $\lambda$ and only $1$ for large $\lambda$. } \label{F: Roots}
\end{figure}

\subsection{Entropy fix} \label{S:fix}
It might happen that Equation~\eqref{eq: r part} does not admit any solution that verifies both the two last conditions of~\eqref{eq: Gpart} and the entropy condition~\eqref{eq: entrop 1/2}.
\begin{figure}[h!t]
	\centering
	\includegraphics[clip=true, trim=1cm 0cm 2cm 0cm, width=0.6\linewidth]{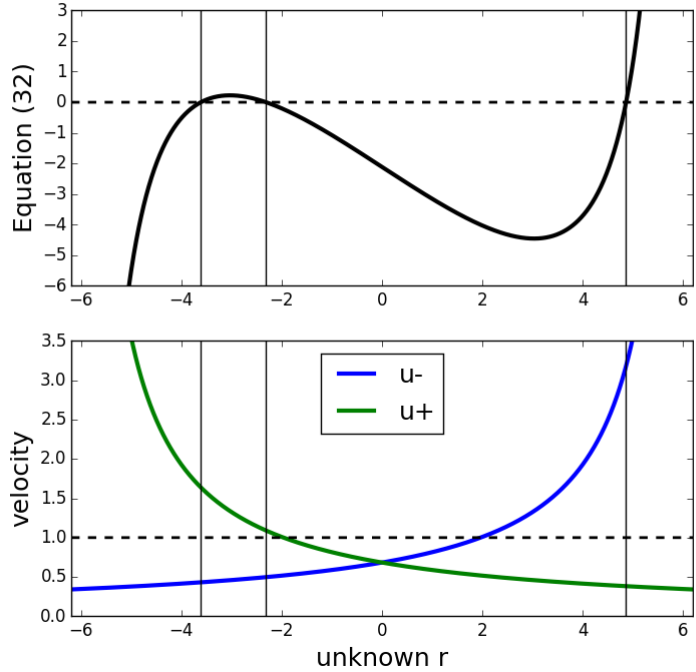}
	\caption{Top: the cubic polynomial associated with~\eqref{eq: r part}. Bottom: velocities of $u_{-}= \frac{q_{\star}}{\rho_{\star}-r}$ and $u_{+}= \frac{q_{\star}}{\rho_{\star}+r}$.} \label{F: OneSol}
\end{figure}
For example, if $\rho_{L}=4$, $\rho_{R}=10$, $q_{L}=1.9$ and $q_{R}=10$, $c=1$ and $\lambda=0.5$, Equation~\eqref{eq: r part} with $A=5$ admits three solutions. It can be seen on top of Figure~\ref{F: OneSol}. On the bottom of the figure, we see that the first and second solutions do not satisfy the third condition of~\eqref{eq: Gpart}. Indeed, $q_{\star}>0$, and the velocity $u_{-}$ at the entrance of the obstacle is subsonic, while the velocity $u_{+}$ at its exit is supersonic. It can be checked that the first and third ones do not verify~\eqref{eq: entrop 1/2}.

Whenever the numerical scheme picks a solution that violates an inequality condition of the interface conditions~\eqref{eq: Gpart}, we change $q_{\star}$ into $\frac{c}{\rho_{+}}$ (if $q_{\star}>0$) or $q_{\star}$ into $\frac{-c}{\rho_{-}}$ (if $q_{\star}<0$). 
This modification does not change $\rho_{\star}$, thus the conservation of mass still holds, but~\eqref{eq: r part} is relaxed. 
This fix is mandatory to approximate correctly sonic solution, i.e. solutions in which a wave interacts with the particle, see Figure~\ref{F: Resonant} below.

\section{Numerical tests}
In this section we investigate the accuracy of the proposed numerical method for different choices of interface conditions. The position of the interface is always $x=0$, but its relative position in the space interval is adapted to the solution of the actual problem. In all computations the Courant number is taken as $0.95$.
In all computations the speed of sound is $c=1$ and the considered intervals are discretized with $200$ cells, if not mentioned explicitly. 

We compare the results given by 
the Rusanov flux~\eqref{eq:fluxRus} and by the FORCE flux 
\begin{equation} \label{eq: fluxFORCE}
 g_{\text{FORCE}}(U_{L},U_{R})= \frac{g_{Rus}(U_{L},U_{R}) + f(U_{\star})}{2}, 
\end{equation}
where the middle state $U_{*}$ is given by~\eqref{eq:U*Rus}.

\subsection{Fluid/particle coupling}
The first series of tests we perform for the fluid/particle model \eqref{eq: Gpart}.
In the examples different Riemann problems at the interface are considered, 
which should cover all relevant scenarios. 

\subsubsection*{Subsonic (Test case $1$)} 
\begin{figure}[h!]
\centering
	\includegraphics[clip=true, trim=3cm 2cm 2cm 2cm, width=0.9\linewidth]{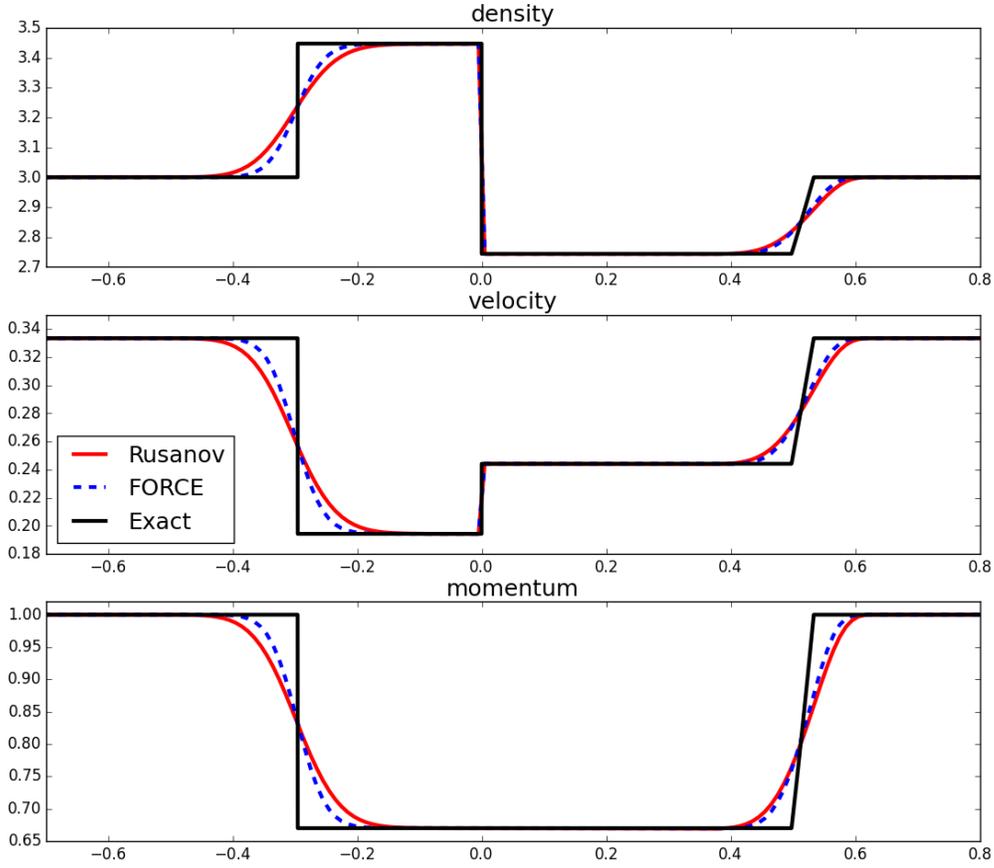}
	\caption{Test case $1$: subsonic initial data, subsonic solution. The obstacle partially blocks the flow.} \label{F: Sub}
\end{figure}
In the first test the initial data is chosen to be subsonic, i.e. $\rho_{L}=\rho_{R}=3$, $q_{L}=q_{R}=1$ and $\lambda=1$.
Thus, there is one wave moving to the left and one to the right.
 A large portion of the fluid is blocked in front of the obstacle and only a small percentage can pass. 
 As shown in Figure~\ref{F: Sub} this leads to a large density and small velocity in front of the obstacle, while small density and larger velocity behind it.
 We observe that the scheme approximates accurately the intermediate states at the interface.
 Only the shape of the waves is smeared out, as it is known from the Rusanov flux.
  \begin{figure}[h!t]
  \centering
  	\includegraphics[clip=true, trim=3cm 2cm 2cm 2cm, width=0.9\linewidth]{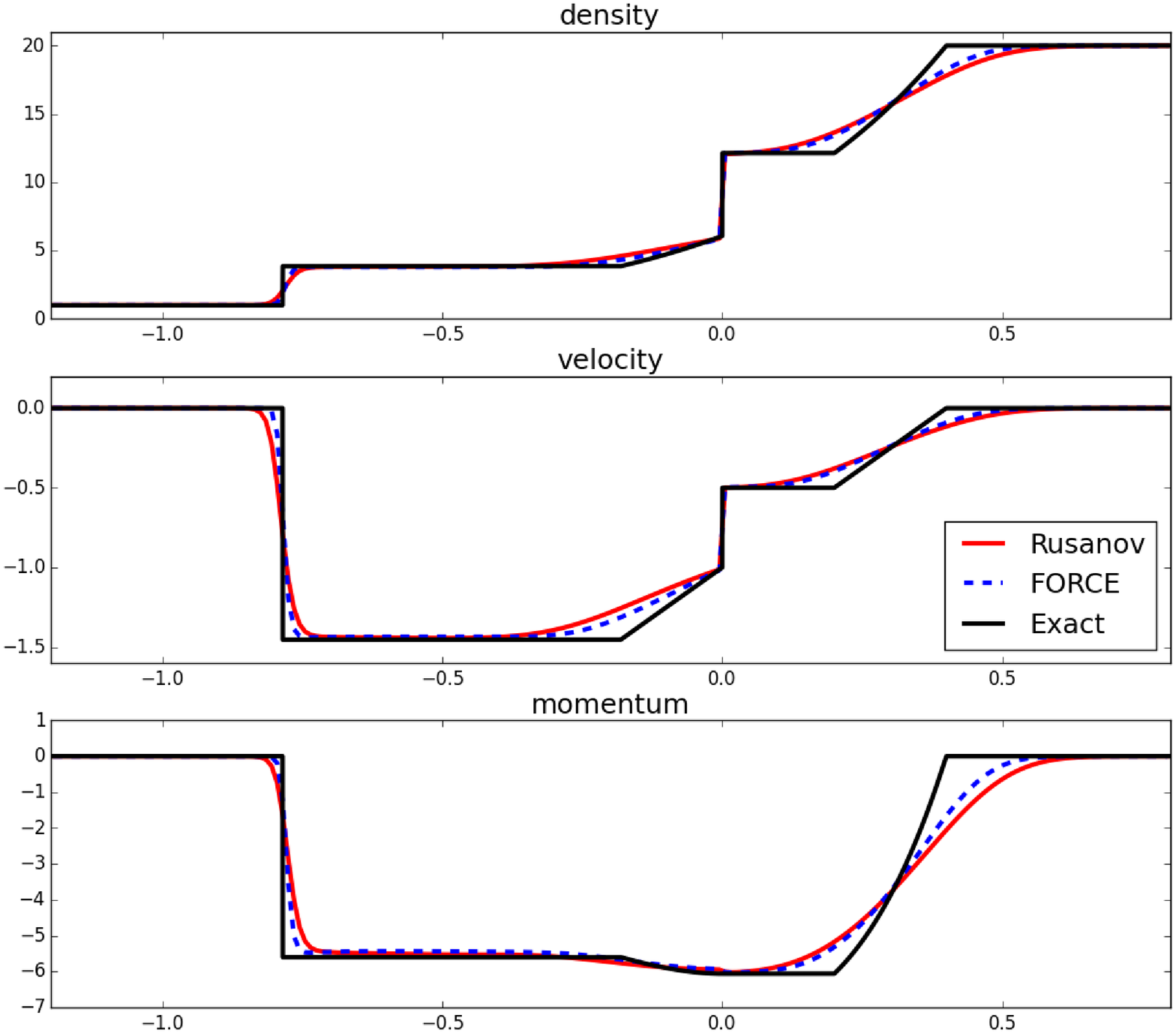}
  	\caption{Test case $2$: subsonic initial data, resonant solution. The solution contains three waves. One of them is a sonic rarefaction starting at the exit of the interface.} \label{F: Resonant}
  \end{figure}
  
  \subsubsection*{Subsonic to sonic (Test case $2$)}
  \begin{figure}[h!t]
  \centering
  	\includegraphics[clip=true, trim=3cm 2cm 2cm 2cm, width=0.9\linewidth]{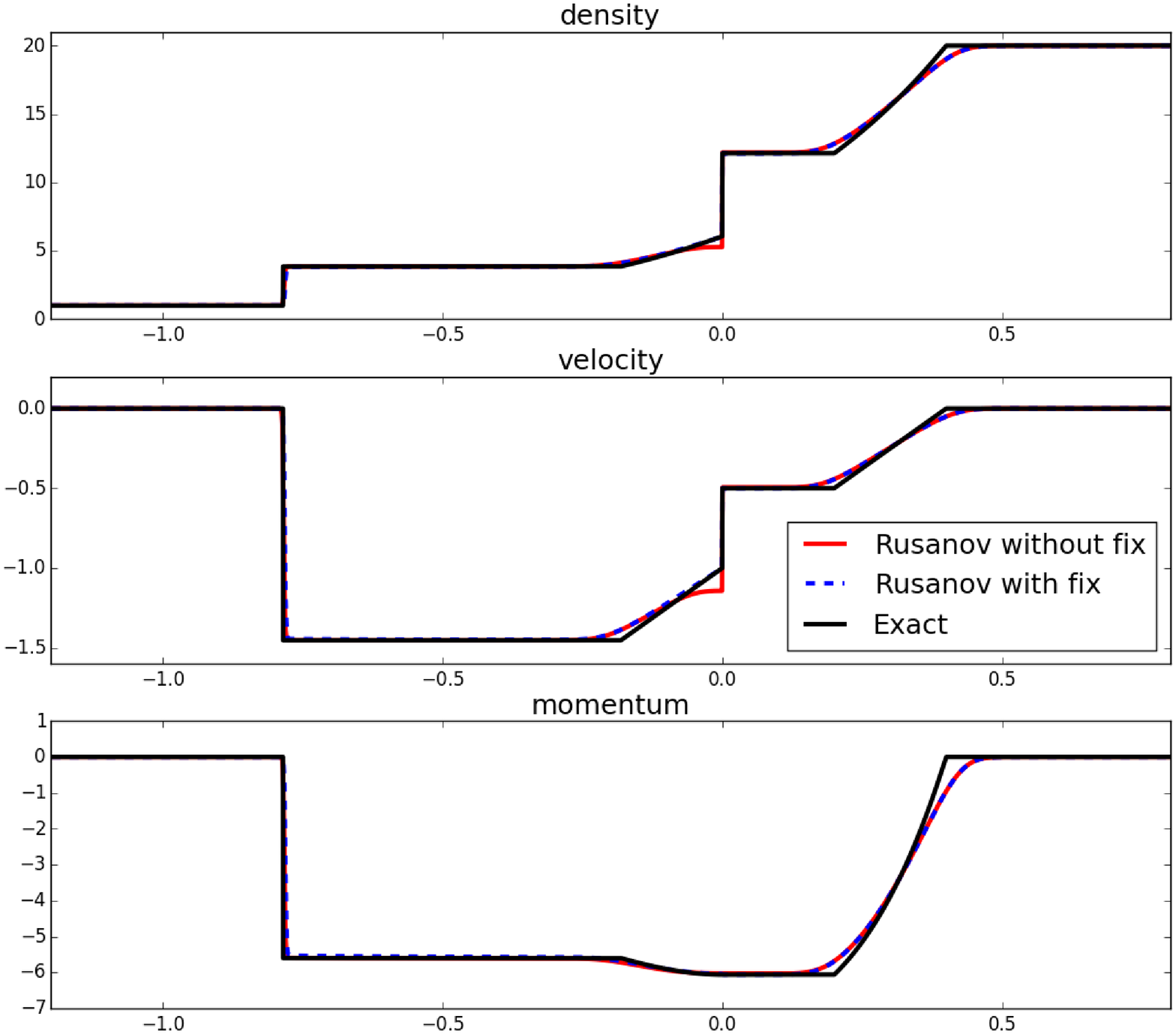}
  	\caption{Test case $2$: Convergence of the scheme with and without the fix of Section~\ref{S:fix} ($\Delta x= 0.001$)}
  	\label{F: Resonant2}
  \end{figure}
  At a coupling point it may occur that the initial data is subsonic, but its solution contains a sonic rarefaction wave. 
  Such a behaviour can be observed for the initial data is $\rho_{L}=1$, $\rho_{R}=20$, $q_{L}=q_{R}=0$ and $\lambda=0.5$, see Figure~\ref{F: Resonant}. 
This is due to the fact that the fluid / particle model is not strictly hyperbolic as shown in of~\cite{Agu14}.
It implies that the waves are not necessarily separated by a constant state. 
Solutions of this kind are difficult to approximate numerically.
In this test the fix of Section~\ref{S:fix} is active and without it the scheme converges towards an incorrect solution, see Figure~\ref{F: Resonant2} where $\Delta x= 0.001$.
\subsubsection*{Supersonic (Test case $3$)}
When the initial data is supersonic, the flow may remain supersonic if the drag coefficient $\lambda$ is small enough. For example if $\rho_{L}=\rho_{R}=1$, $q_{L}=q_{R}=3$ and $\lambda=1$, the obstacle does not slows down the flow enough to reach the sonic point. Thus, the perturbations due to the obstacle stay behind it, see Figure~\ref{F: Sup1}. 
\begin{figure}[h!t]
	\centering
	\includegraphics[clip=true, trim=3cm 2cm 2cm 2cm, width=0.9\linewidth]{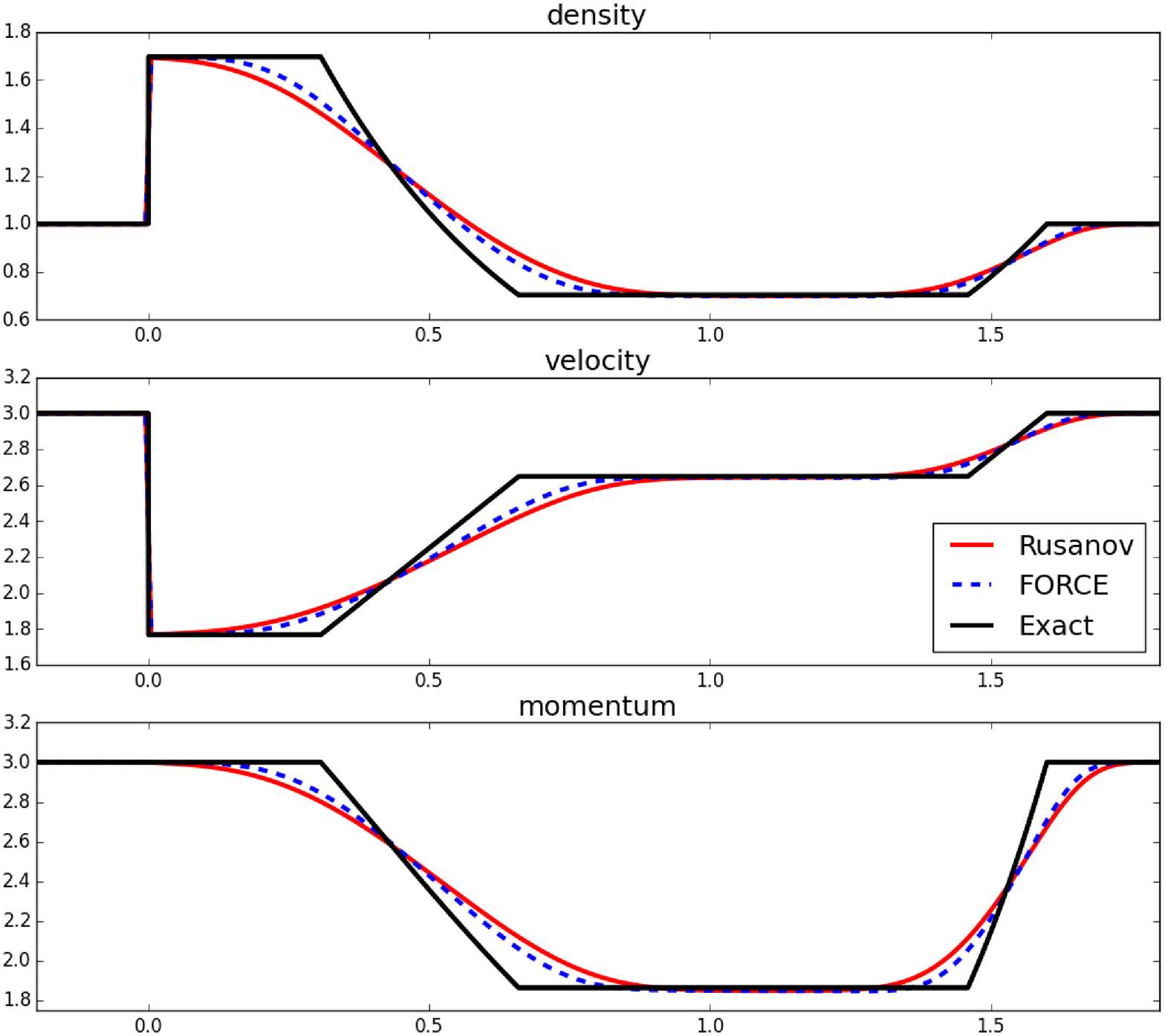}
	\caption{Test case $3$: supersonic initial data, supersonic solution. The flows is slowed downed by the obstacle, but remains supersonic.} \label{F: Sup1}
\end{figure}
\subsubsection*{Sonic (Test case $4$)}
 When $\lambda$ increases, we first obtain a sonic (or resonant) solution, the effect of the obstacle is strong enough to decreases the fluid's velocity below the speed of sound. 
 The fluid's velocity is subsonic in front of the obstacle and sonic behind it, see Figure~\ref{F: Sup2}. 
 The initial data is the same as in the previous case but with $\lambda=10$. 
 It is a very difficult test case, because the flow is resonant and the solution contains a slowly moving shock on the left side of the interface. 
 Slowly moving shocks are very difficult to capture numerically, see~\cite{AR97}. 
 A spurious peak appears in the momentum and pollutes the rest of the solution. 
 Note that this is due to the slowly moving shock and not to the method used at the interface; it explains the larger errors observed here. 
 In this particular test case the space interval is discretized with $800$ cells.
 \begin{figure}[h!t]
 \centering
 	\includegraphics[clip=true, trim=3cm 2cm 2cm 2cm, width=0.9\linewidth]{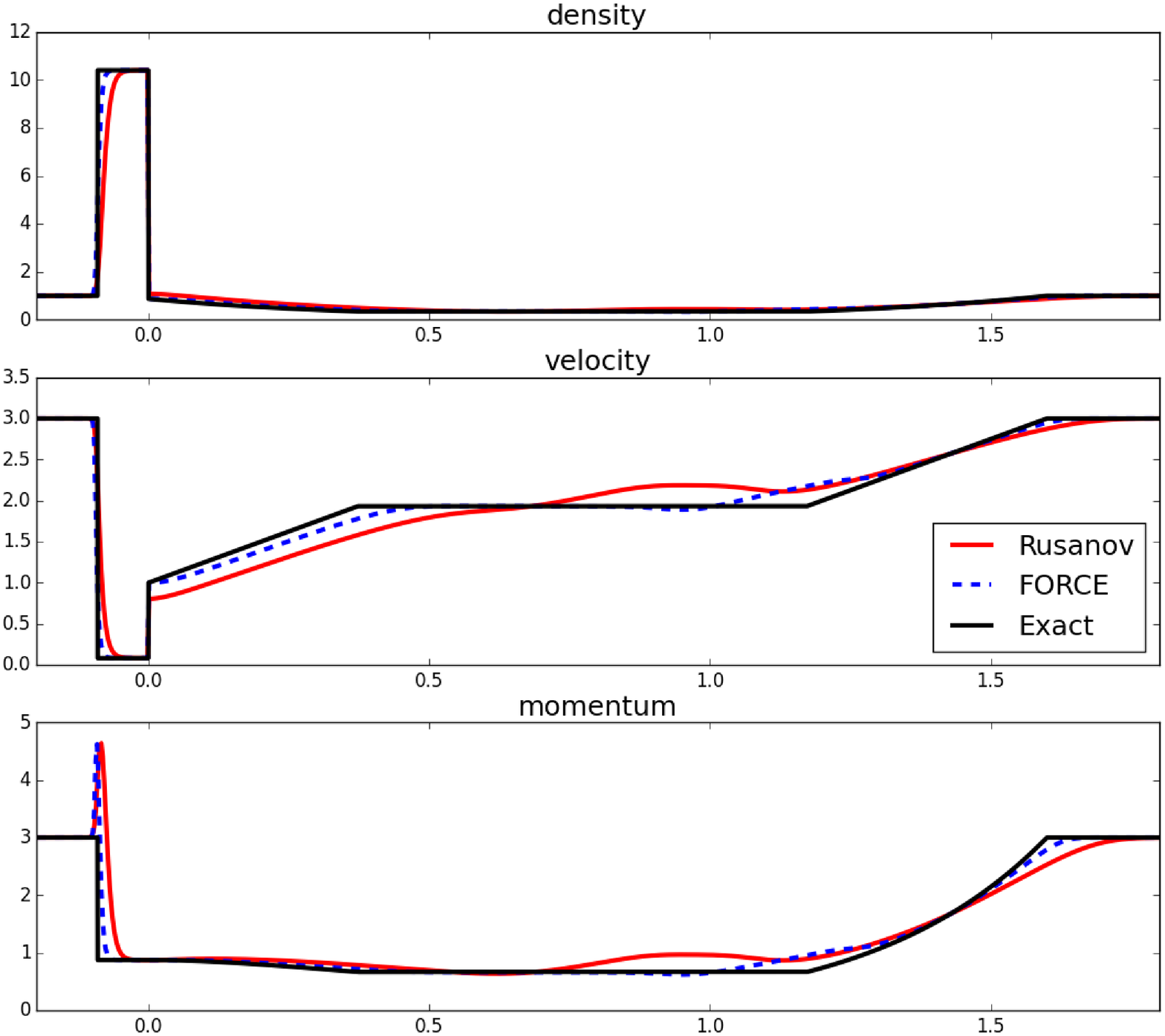}
 	\caption{Test case $4$: supersonic initial data, sonic solution. This case is somehow similar to test case $2$. $M=800$} \label{F: Sup2}
 \end{figure}
\subsubsection*{Sonic to subsonic (Test case $5$)}
 When $\lambda$ is very large, the flow might become subsonic on each sides of the obstacle, see Figure~\ref{F: Sup3}. 
 For this test case, $\lambda=10$, $\rho_{L}= \rho_{R}=2.5$, $q_{L}=q_{R}=3$. 
 The solution is approaches the case in which a rigid wall is placed at the interface.
 \begin{figure}[h!t]
 \centering
 	\includegraphics[clip=true, trim=3cm 2cm 2cm 2cm, width=0.9\linewidth]{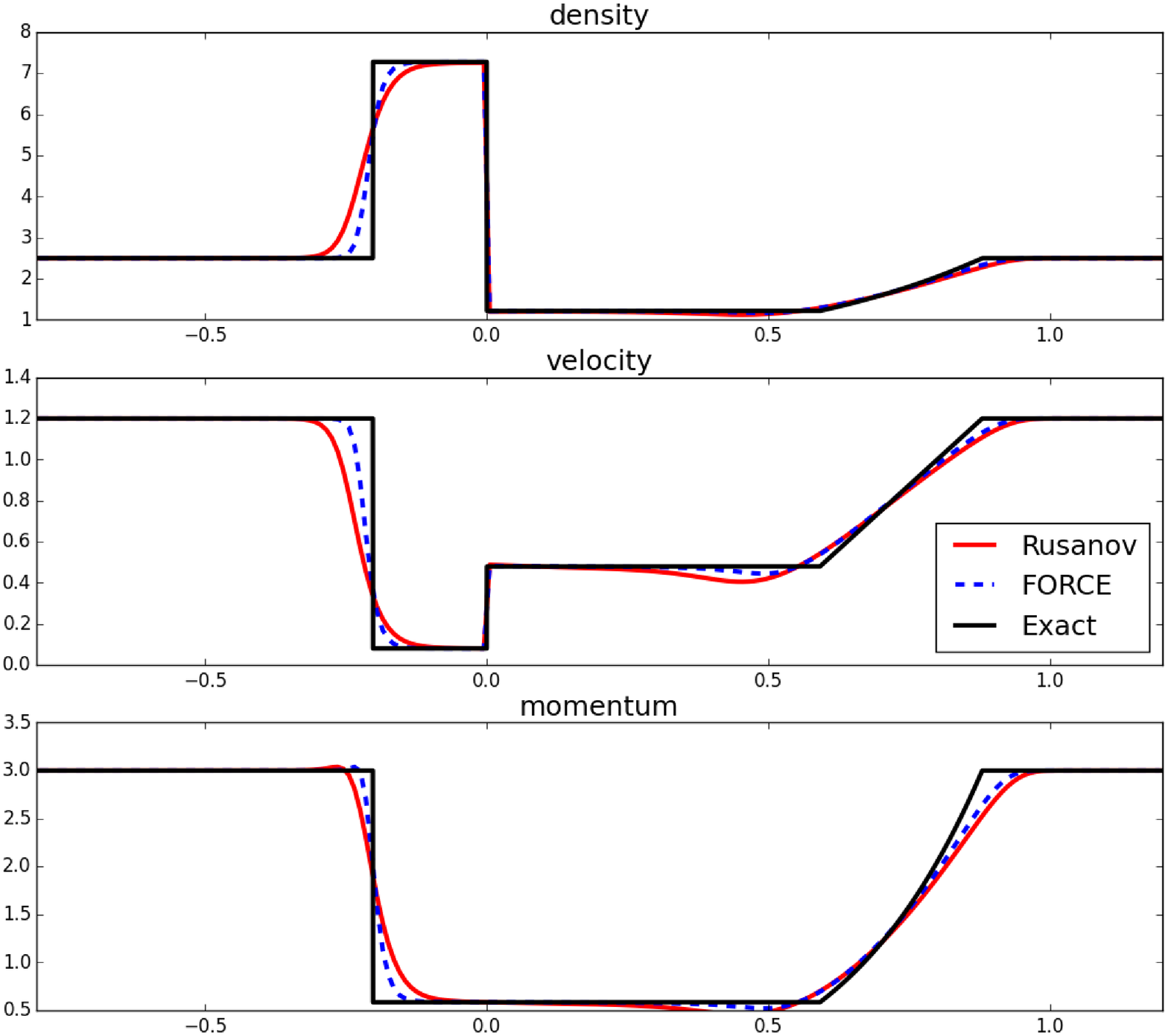}
 	\caption{Test case $5$: supersonic initial data, subsonic solution. The qualitative allure of the solution is similar to the one in test case $1$.} \label{F: Sup3}
 \end{figure}

\subsection{Fluid/particle coupling with heat exchange}
In this section we present a numerical simulation of model~\eqref{eq:heatex} obtained with Scheme~(\allowbreak\ref{eq:inG}-\ref{eq:update}-\ref{eq:no fluct}) with the Rusanov flux~\eqref{eq:fluxRus} and the FORCE flux~\eqref{eq: fluxFORCE}, when the parameters $\lambda$ and $\mu$ vary. The initial data is a constant subsonic flow: the fluid has initially a constant density of $4$, a constant velocity of $1$ and a constant pressure of $4$. The adiabatic exponent is $\gamma=1.5$ and we take $s_{P}=2$. The final time is $T=0.03$, $\rho_{0}$ is set to $1$, and the space interval $[-0.1, 0.1]$ is discretized with $500$ cells.
We considered the three following cases.

\subsubsection*{$\lambda=1$ and $\mu=0$ (Test case $6$): The obstacle partially blocks, but does not exchange heat with  the flow}
 The results are given on Figure~\ref{F:Heating1}. In that case, the qualitative behavior is the same than in the subsonic case presented on Figure~\ref{F: Sub}. Most of the fluid is stuck in front of the particle, where the fluid's velocity is small, and both the pressure and the internal energy  are large. A small part of the fluid manages to pass through the obstacle: the air after the obstacle has high velocity, and low pressure and internal energy.
 \begin{figure}[h!t]
 \centering
  \includegraphics[clip=true, trim=1.8cm 1cm 1.8cm 1cm, width=\linewidth]{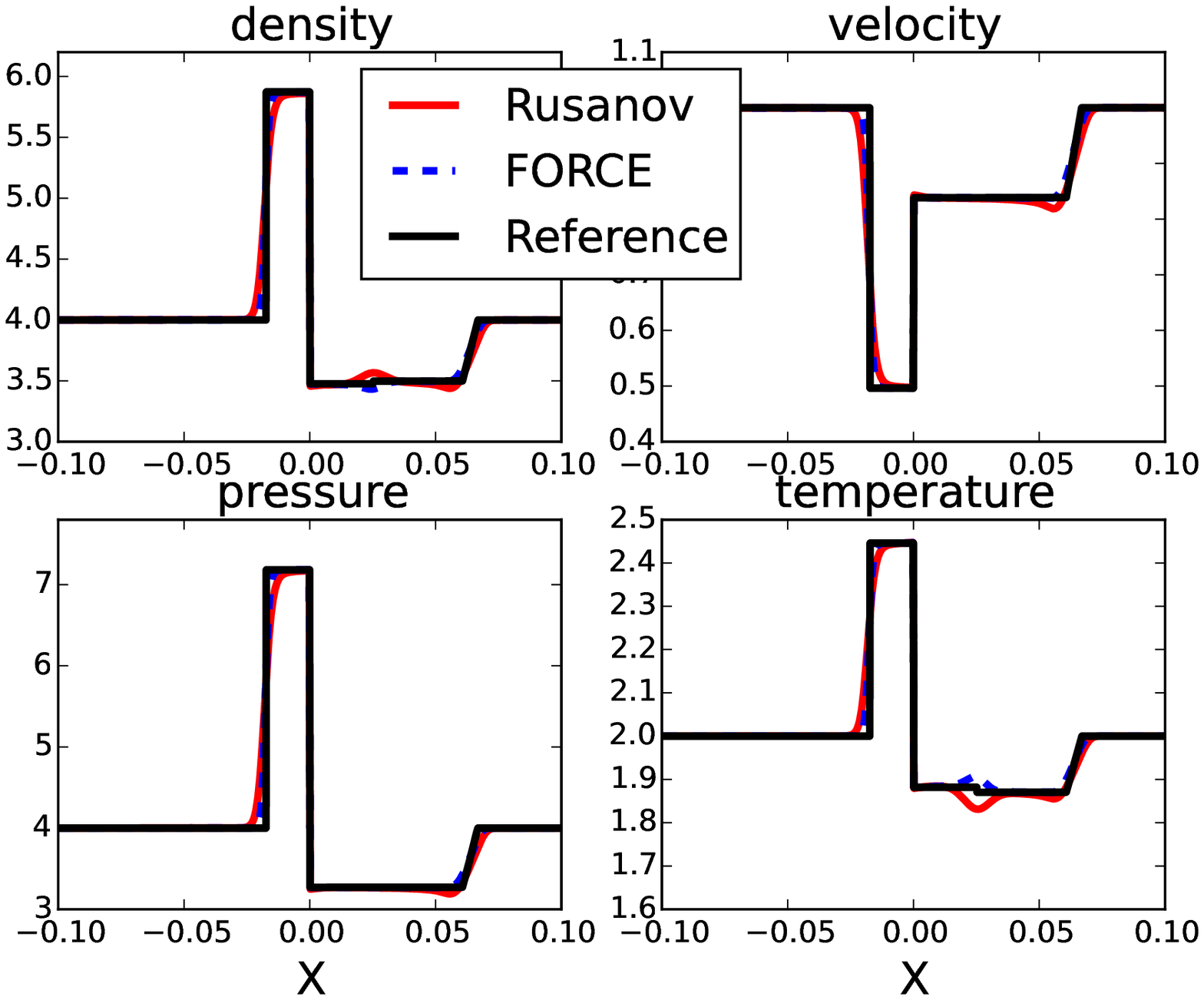}
 \caption{A subsonic constant flow is modified by an heat-insulating obstacle: $\lambda=1$, $\mu=0$} \label{F:Heating1}
\end{figure}

\subsubsection*{$\lambda=0$ and $\mu=0.5$ (Test case $7$): The obstacle does not block, but heats the flow} The results are given on Figure~\ref{F:Heating2}. 
We can think of this situation as an external heat source.
In that case, the main effect is that the temperature (i.e. the internal energy) of the fluid increases after the particle. 
This influences the other quantities according to the ideal gas law.
\begin{figure}[h!t]
\centering
 \includegraphics[clip=true, trim=1.8cm 1cm 1.8cm 1cm, width=\linewidth]{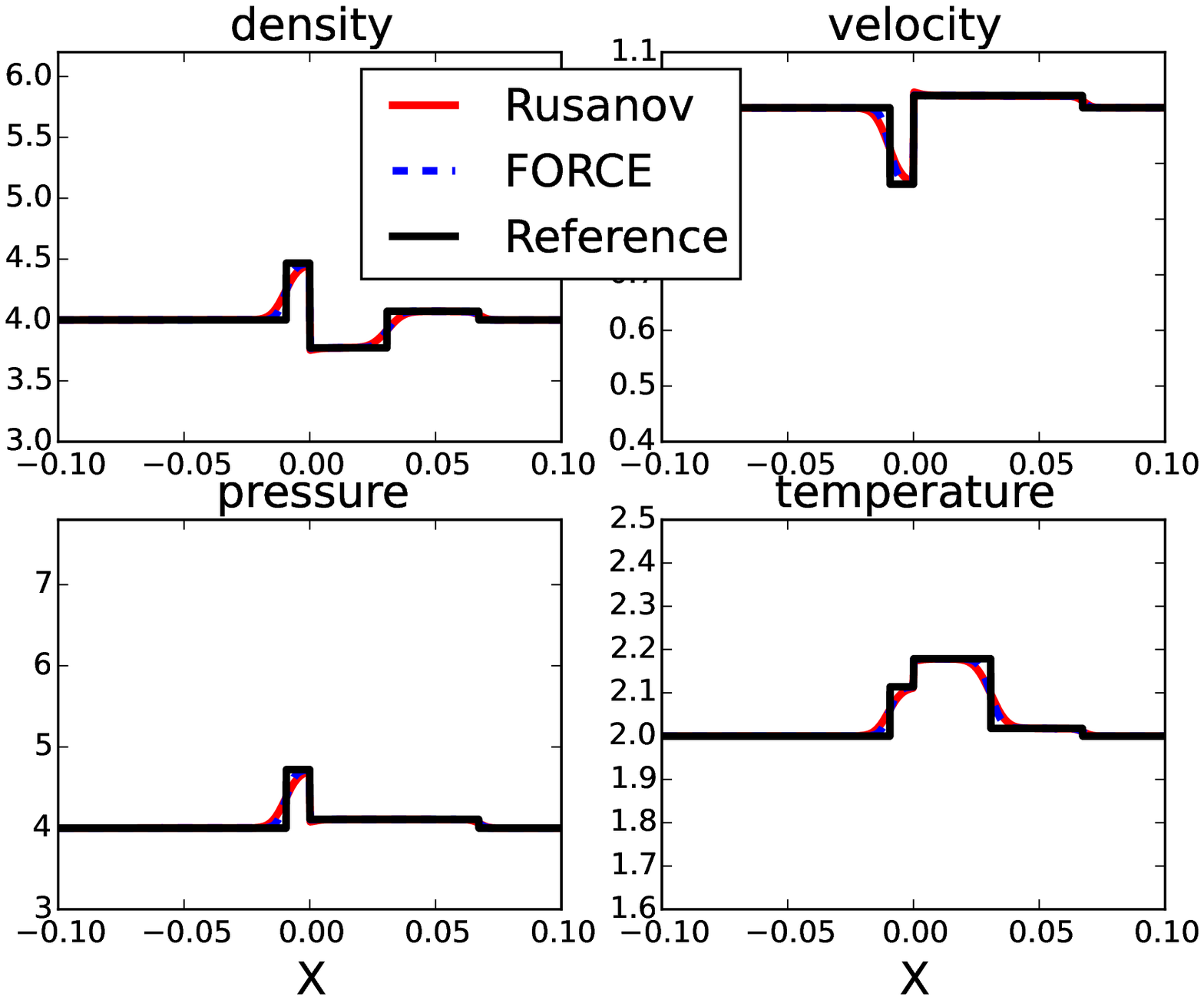}
 \caption{A subsonic constant flow is modified by a source that does not block the flow: $\lambda=0$, $\mu=0.5$, $s_{P}=2$} \label{F:Heating2}
\end{figure}

\subsubsection*{$\lambda=1$ and $\mu=0.5$ (Test case $8$): The obstacle both slows down and heats the flow} 
The results are given on Figure~\ref{F:Heating3}. The behavior is a mix between the two previous cases. 
In particular, depending on the ratio between $\lambda$ and $\mu$, the temperature after the obstacle can be larger or smaller than in front of it.
 \begin{figure}[h!t]
 \centering
 \includegraphics[clip=true, trim=1.8cm 1cm 1.8cm 1cm, width=\linewidth]{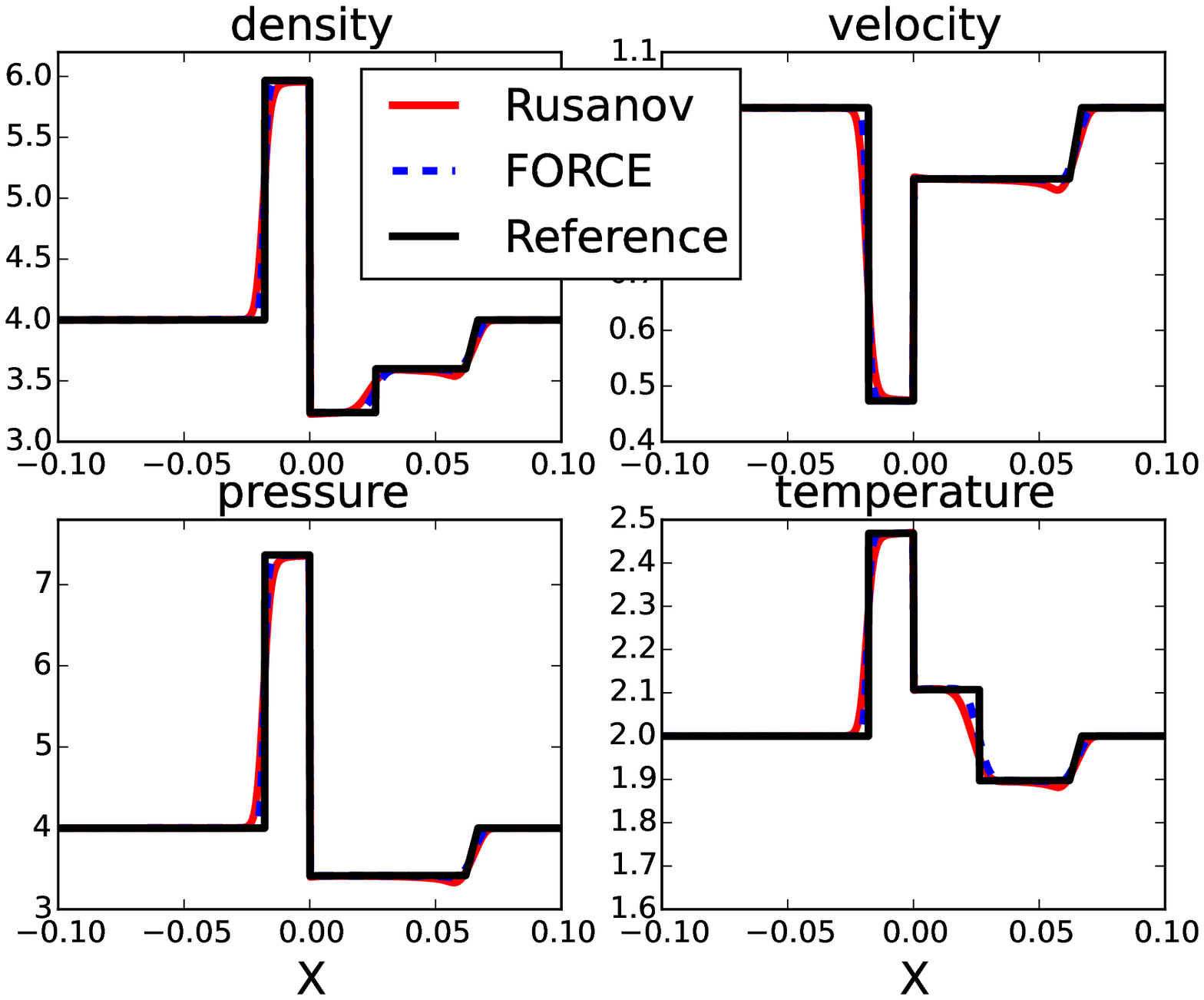}
 \caption{A subsonic constant flow is modified by a heating obstacle, that both slows down and exchange heat with the fluis: $\lambda=1$, $\mu=0.5$, $s_{P}=2$} \label{F:Heating3}
\end{figure}

Numerically, we observed that the scheme does not find a solution in the first iterations in time with our naive starting point. 
In that case it picks a solution as close as possible to $0$, thus it is somehow close to the scheme proposed in~\cite{Bor15}. 
After a few iterations, a solution is found at each time step.

The simulations presented here corresponds to a Riemann problem. 
At the present time, the solution of the Riemann problem for model~\eqref{eq:heatex} is not known. 
However, at the end of the simulation, we can use the left and right numeric traces and check that the structure of the solution described by (\ref{eq:Uminus}-\ref{eq:Uplus}-\ref{eq:inG}) is respected. These are used to construct the black ``Reference'' curves on Figures~\ref{F:Heating1}, \ref{F:Heating2} and~\ref{F:Heating3}.	

\subsection{Gas dynamics with different pressure laws}
In this section we present two numerical simulations of the coupling of two fluids with different pressure laws \eqref{eq:diffpressure}, each for the flux coupling~\eqref{eq:fluxcoupling} and the state coupling~\eqref{eq:statecoupling}. 
The initial data are the same than the ones proposed in~\cite{CGHMR15}: 
$$
\begin{array}{|c|cccccccc|}
\hline
  & \gamma_{L} & \rho_{L} & u_{L} & p_{L} & \gamma_{R} & \rho_{R} & u_{R} & p_{R}  \\
  \hline 
  \text{Test case 9} & 1.4 & 1.6 & 0.4 & 2.35 & 1.28 & 1.6 & 0.4 & 2.35 \\
  \hline 
  \text{Test case 10} & 1.4 & 1.6 & 0.4 & 2.35 & 1.28 & 1.4 & 0.4 & 1.9 \\
  \hline
\end{array}
$$
Scheme~\eqref{eq:update} is adapted to take into account the different physics, i.e. we use a flux $g_{L}$ with pressure law $p_{L}$ in the first two lines of~\eqref{eq:update}, and a flux $g_{R}$ with pressure law $p_{R}$ for the last two lines.

The results presented in Figure~\ref{F:diffpressure1} and~\ref{F:diffpressure2} are obtained with the FORCE flux (results for the Rusanov scheme are similar, but more diffusive). 
The first test case is an equilibrium for the  state coupling, which is exactly preserved by the numerical scheme. 
For the flux coupling, the scheme exactly preserves the conserved variables. 
For both coupling conditions, the results are comparable to those obtained in~\cite{CGHMR15} with a relaxation scheme.
 \begin{figure}[h!t]
 \includegraphics[clip=true, trim=2cm 1cm 2cm 1cm, width=0.95\linewidth,height=0.9\linewidth]{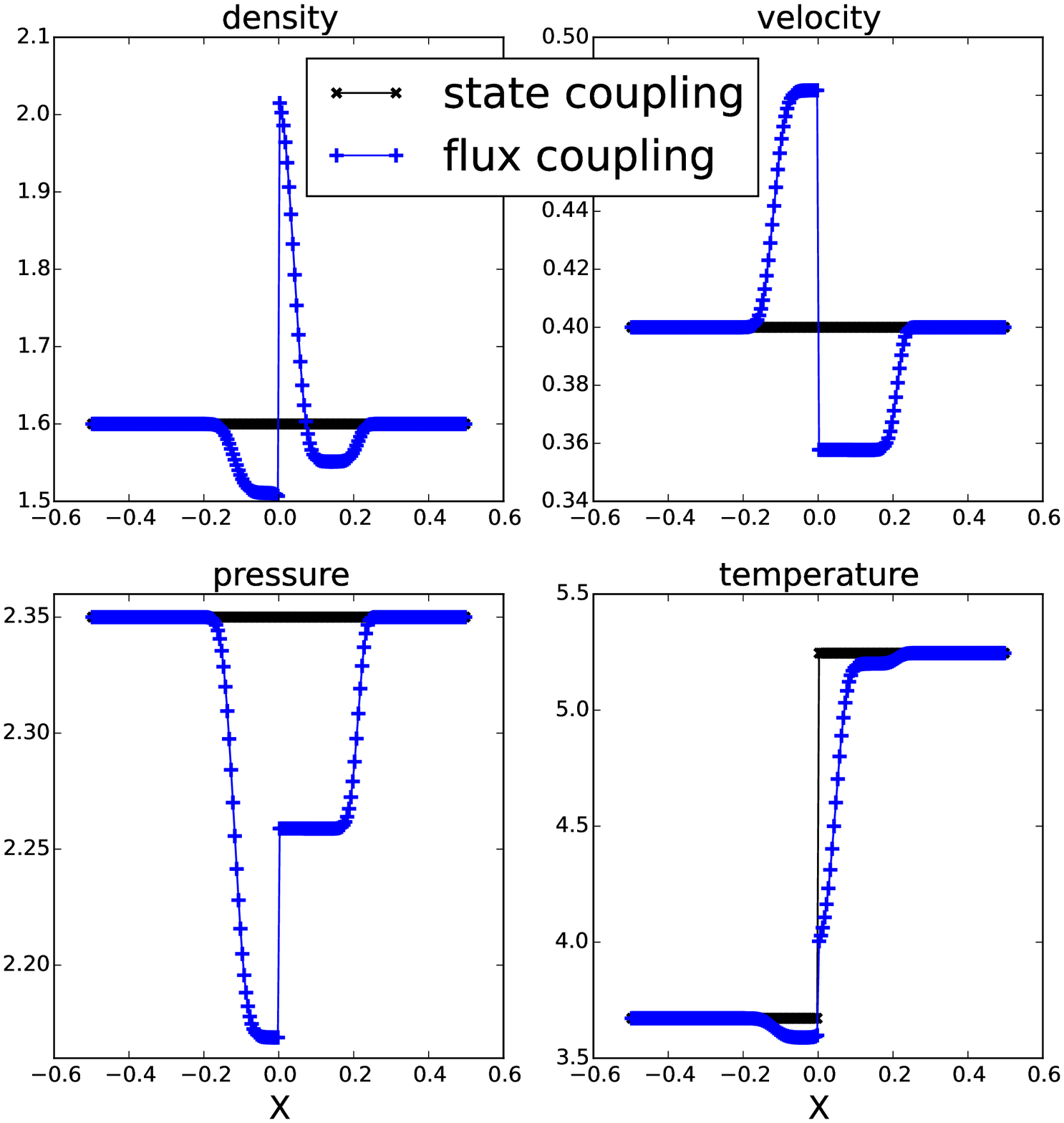}
 \caption{Test case $9$: flux and state couplings. The final time is $0.12$, the Courant number is set to $0.95$ and the space step is $\Delta x= 0.005$} \label{F:diffpressure1}
\end{figure}
 \begin{figure}[h!t]
 \includegraphics[clip=true, trim=2cm 1cm 2cm 1cm, width=0.95\linewidth,height=0.9\linewidth]{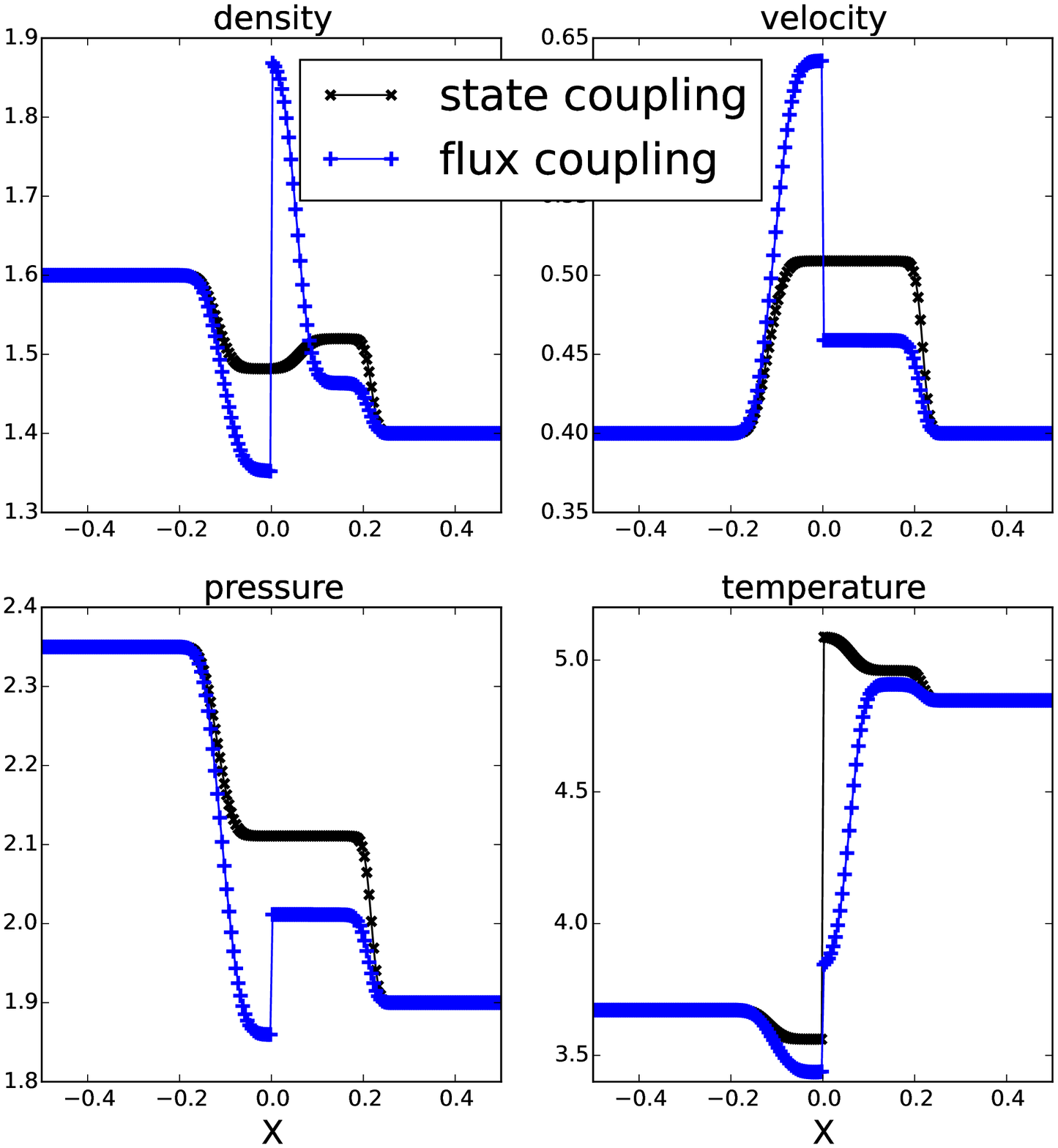}
 \caption{Test case $10$: flux and state couplings. The final time is $0.12$, the Courant number is set to $0.95$ and the space step is $\Delta x= 0.005$} \label{F:diffpressure2}
\end{figure}

\subsection{Barotropic flows in a nozzle with piecewise constant cross-section} 
As last example we consider model~\eqref{eq:nozzle} with the interface conditions~\eqref{eq:Gnozzle} and the pressure law $p(\tau)=\tau^{-3}$. 
We reproduce the two test cases proposed in~\cite{CKS14}. The initial conditions  are given by
$$
\begin{array}{|c|ccc|}
\hline
  & \alpha_{L} & \rho_{L} & w_{L}   \\
  \hline 
  \text{Test case 11} & 0.3 & 0.206052848877390 & -0.003218270138816 \\
  \hline 
  \text{Test case 12} & 1 & 0.988056834959612 & 0.125759712385390  \\
  \hline
   & \alpha_{R} & \rho_{R} & w_{R}  \\
   \hline
   \text{Test case 11} & 0.4 & 0.099 & -0.015876669673295 \\
  \hline 
  \text{Test case 12} & 100 & 1.01 & 0.018403108075689  \\
  \hline
\end{array}
$$
The final time is $1$ for test case $11$ and $0.15$ for test case $12$.
The space interval is $[-0.5, 0.5]$. 
The results for a discretization with $\Delta x= 0.01$ are given on Figures~\ref{F:nozzle1} and~\ref{F:nozzle2}. 
as the exact traces around the interface are known the numerical errors for test case $11$ are given in the table below.
$$ 
\begin{array}{|c|cccc|}
\hline
 	& \rho_{-} & w_{-} & \rho_{+} & w_{+} \\
\hline
\text{exact} & 0.1440929013128 & 0.10409950707725 & 0.15 & 0.075 \\
\hline
\begin{array}{c}
 \text{Rusanov} \\
 \Delta x=10^{-2}
\end{array}
 & 6.22\times 10^{-3} & 1.36 \times 10^{-4} & 3.09 \times 10^{-4} & 6.931 \times 10^{-5} \\
 \hline
 \begin{array}{c}
 \text{Rusanov} \\
  \Delta x=10^{-3}
\end{array}
 & 8.83 \times 10^{-6} & 8.26 \times 10^{-5} & 1.86 \times 10^{-5} & 5.48 \times 10^{-5} \\
 \hline
\begin{array}{c}
 \text{FORCE} \\
  \Delta x=10^{-2}
\end{array}
 & 1.49 \times 10^{-4} & 1.38 \times 10^{-4} & 1.54 \times 10^{-4} & 1.15 \times 10^{-4} \\
 \hline
 \begin{array}{c}
 \text{FORCE} \\
  \Delta x=10^{-3}
\end{array}
 & 4.54 \times 10^{-6} & 4.59 \times 10^{-5} & 9.99 \times 10^{-6} & 3.04 \times 10^{-5} \\
 \hline
\end{array}
$$
Both schemes produce good approximations, which converge to the exact solution when the grid is refined.

The next table regroups the exact traces and the errors around the interface for test case $12$. 
It clearly shows that the scheme is able to capture correctly the solution when the jump in the cross section is large. This is even true for the dissipative Rusanov scheme with a coarse discretization. 
$$ 
\begin{array}{|c|cccc|}
\hline
 	& \rho_{-} & w_{-} & \rho_{+} & w_{+} \\
\hline
\text{exact} & 0.9980372070299 & 0.108472909864928 & 1 & 0.0010826 \\
\hline
\begin{array}{c}
 \text{Rusanov} \\
  \Delta x=10^{-2}
\end{array}
 & 4.96\times 10^{-7} & 2.27 \times 10^{-5} & 2.45 \times 10^{-6} & 2.91 \times 10^{-7} \\
 \hline
 \begin{array}{c}
 \text{Rusanov} \\
  \Delta x=10^{-3}
\end{array}
 & 6.35 \times 10^{-7} & 6.3 \times 10^{-7} & 6.63 \times 10^{-7} & 7.57 \times 10^{-9} \\
 \hline
\begin{array}{c}
 \text{FORCE} \\
  \Delta x=10^{-2}
\end{array}
 & 1.68 \times 10^{-6} & 1.63 \times 10^{-7} & 1.69 \times 10^{-6} & 3.35 \times 10^{-8} \\ 
 \hline
 \begin{array}{c}
 \text{FORCE} \\
  \Delta x=10^{-3}
\end{array}
 & 4.82 \times 10^{-7} & 3.13 \times 10^{-8} & 4.83 \times 10^{-7} & 1.22 \times 10^{-9} \\ 
\hline
\end{array}
$$

 \begin{figure}[h!t]
 \includegraphics[clip=true, trim=1.5cm 0.5cm 2cm 0cm, width=0.95\linewidth,height=0.42\linewidth]{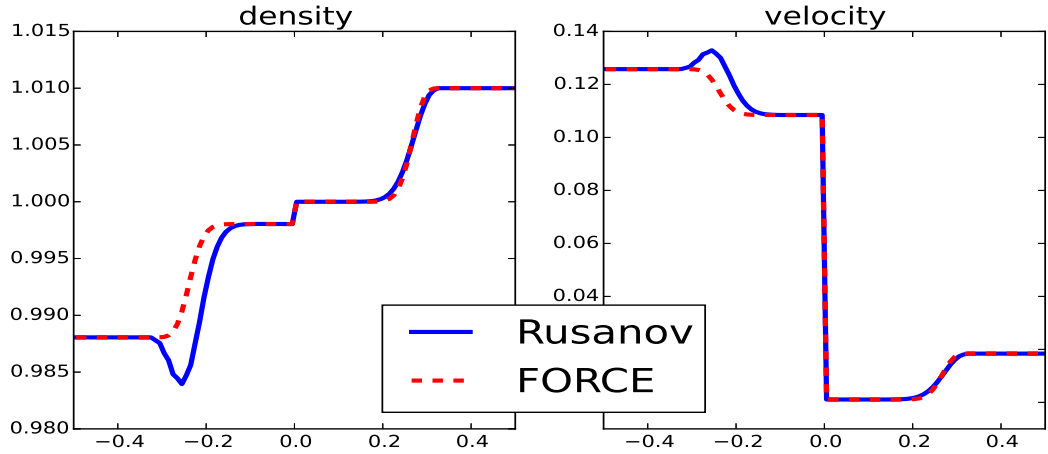}
 \caption{Test case $7$: a subsonic Riemann problem for model~\eqref{eq:nozzle}} \label{F:nozzle1}
\end{figure}
\begin{figure}[h!t]
 \includegraphics[clip=true, trim=1.5cm 0.5cm 2cm 0cm, width=0.95\linewidth,height=0.42\linewidth]{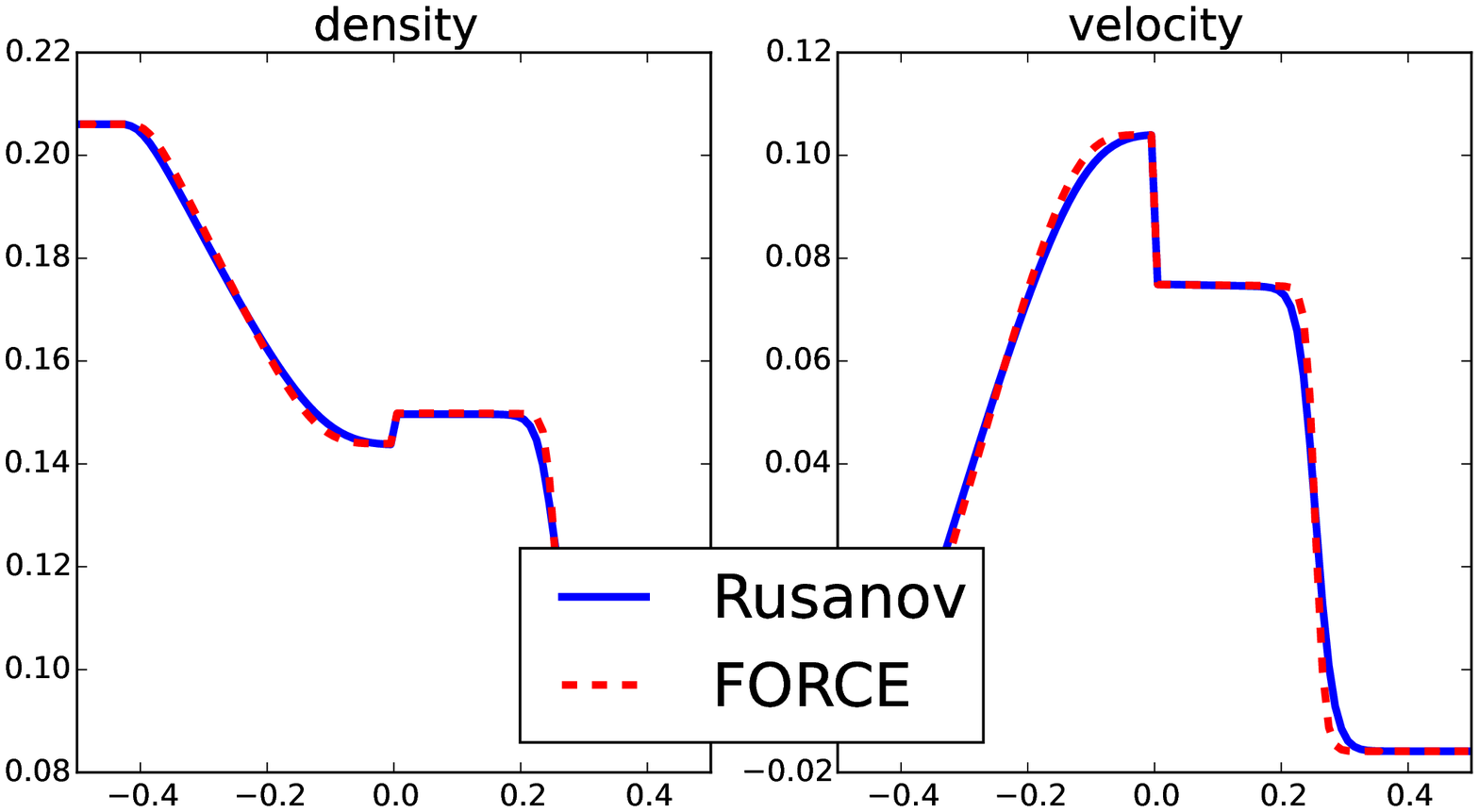}
 \caption{Test case $8$: a subsonic Riemann problem for model~\eqref{eq:nozzle} with a large jump in the cross section} \label{F:nozzle2}
\end{figure}

\section*{Conclusion}
We presented a numerical approach which is able to deal with interface conditions for $1$-dimensional hyperbolic systems of conservation laws. 
The scheme is derived in such a way that the structure of the Riemann problem is mimicked at the numerical level ``as good as possible''. 
The interface conditions are exactly taken into account. 
In the general case, it is not possible to enforce that no waves enter the junction. 
Thus we relaxed the classical coupling procedure by only requiring that the effects of all entering waves cancel each other.

The approach is analyzed in detail for the Godunov and the Lax-Friedrichs scheme and it is proved that the resulting approximate Riemann solver verifies an entropy inequality.
This scheme was tested on four different models.
In the first one, by introducing a small modification, also sonic flows can be approximated.
The other test cases include a model with heat exchange and a coupling of two different conservation laws on each side of the interface. 
This solver is easy to implement and the applications illustrate its flexibility.

\textbf{Acknowledgment:} The first author was partially supported by the University Paris Sud and by the Labex Archimède of Aix-Marseille Université.

\bibliographystyle{plain}
\bibliography{biblioInterfaceWithoutRP}

\end{document}